\journal{Linear Algebra and its Applications}
\newtheorem{theorem}{Theorem}[section]
\newtheorem{lemma}[theorem]{Lemma}
\newtheorem*{lemma*}{Lemma}
\newtheorem{conj}[theorem]{Conjecture}
\newtheorem{corollary}[theorem]{Corollary}
\newtheorem{prop}[theorem]{Proposition}
\theoremstyle{definition}
\newtheorem{definition}[theorem]{Definition}
\newtheorem{example}[theorem]{Example}
\theoremstyle{remark}
\newtheorem{remark}[theorem]{Remark}
\newcommand{\diag}[1]{\operatorname{diag}\left( #1 \right)}		
\newcommand{\bb}[1]{\mathbb{#1}}						
\newcommand{\hyp}[2]{#1 \hyperref[#2]{\ref{#2}}}			
\begin{document}
\begin{frontmatter}
\title{On the realizability of the critical points of a realizable list\tnoteref{t1}}
\tnotetext[t1]{This work was supported by NSF Award \href{https://www.nsf.gov/awardsearch/showAward?AWD\_ID=1460699}{DMS-1460699}.}

\author[hc]{Sarah L.~Hoover\fnref{nsf}}
\ead{shoover@hamilton.edu}

\author[fit]{Daniel A.~McCormick\fnref{nsf}}										
\ead{dmccormick2014@my.fit.edu}

\author[uwb]{Pietro Paparella\corref{corpp}\fnref{nsf}}
\ead{pietrop@uw.edu}
\ead[url]{http://faculty.washington.edu/pietrop/}

\author[uwb]{Amber R.~Thrall\fnref{nsf}}
\ead{amber.rose.thrall@gmail.com}
\ead[url]{http://amber.thrall.me/}

\cortext[corpp]{Corresponding author.}

\address[hc]{Hamilton College, 198 College Hill Rd., Clinton, NY, 13323, USA}
\address[fit]{Florida Institute of Technology, 150 W. University Blvd., Melbourne, FL, 32901, USA}
\address[uwb]{Division of Engineering and Mathematics, University of Washington Bothell, 18115 Campus Way NE, Bothell, WA 98011, USA}

\begin{abstract}
The {nonnegative inverse eigenvalue problem} (NIEP) is to characterize the spectra of entrywise nonnegative matrices. A finite multiset of complex numbers is called {realizable} if it is the spectrum of an entrywise nonnegative matrix. Monov conjectured that the \(k\)\textsuperscript{th}-moments of the list of critical points of a realizable list are nonnegative. Johnson further conjectured that the list of critical points must be realizable. In this work, Johnson's conjecture, and consequently Monov's conjecture, is established for a variety of important cases including {Ciarlet spectra}, {Sule\u{\i}manova spectra}, spectra realizable via companion matrices, and spectra realizable via similarity by a complex Hadamard matrix. Additionally we prove a result on differentiators and trace vectors, and use it to provide an alternate proof of a result due to Malamud and a generalization of a result due to Kushel and Tyaglov on circulant matrices. Implications for further research are discussed.
\end{abstract}

\begin{keyword}
Nonnegative inverse eigenvalue problem \sep Sule\u{\i}manova spectrum \sep differentiator \sep trace vector \sep complex Hadamard matrix 

\MSC[2010] 15A29 \sep 15A18 \sep 15B48 \sep 30C15
\end{keyword}
\end{frontmatter}

\section{Introduction}

The longstanding \emph{nonnegative inverse eigenvalue problem} (NIEP) is to characterize the spectra of entrywise nonnegative matrices.  More specifically, given a finite multi-set (herein \emph{list}) $\Lambda = \{ \lambda_1, \dots, \lambda_n \}$ of complex numbers, the NIEP asks for necessary and sufficient conditions such that $\Lambda$ is the spectrum of an $n$-by-$n$ entrywise-nonnegative matrix. If \( \Lambda\) is the spectrum of a nonnegative matrix \(A\), then \(\Lambda\) is called \emph{realizable} and \(A\) is a called \emph{realizing matrix (for \( \Lambda \))}.

It is well-known that if \( \Lambda=\{\lambda_1,\dots,\lambda_n\} \) is realizable, then \( \Lambda \) must satisfy the following conditions:
\begin{align}
	\overline\Lambda &:= \{ \bar{\lambda}_1,\dots,\bar{\lambda}_n\} = \Lambda;			\label{selfconjugacy}	\\  	
  	\rho(\Lambda) &:= \max_{1 \le i\le n} \{|\lambda_i| \} \in \Lambda;  				\label{spectralradius}	\\		
    	s_k(\Lambda) &:= \sum_{i=1}^n \lambda_i^k \ge 0,~ \forall k \in \mathbb{N};~\text{and} 	\label{moment} 		\\  	
    	s_k^m(\Lambda) &\le n^{m-1}s_{km}(\Lambda),~\forall k,m \in \mathbb{N}.  			\label{J-LL}
\end{align}
It is worth noting that these conditions are not independent: Loewy and London \cite{loewylondon1978} showed that the \emph{moment condition} \eqref{moment} implies the \emph{self-conjugacy condition} \eqref{selfconjugacy}; Friedland \cite[Theorem 1]{friedland1978} showed that the \emph{eventual nonnegativity} of the moments implies the spectral radius condition \eqref{spectralradius}; and if the \emph{trace} is nonnegative, i.e., if \(s_1(\Lambda) \ge 0\), then the \emph{J-LL condition} \eqref{J-LL} (established independently by Johnson \cite{j1981} and by Loewy and London \cite{loewylondon1978}) implies the moment condition since 
\[ s_k(\Lambda) \ge \frac{1}{n^{k-1}} s_1^k(\Lambda) \ge 0. \]
Thus, the J-LL condition and the nonnegativity of the trace imply the self-conjugacy, spectral radius, and moment conditions. There is another necessary condition due to Holtz \cite{holtz2004}, which was shown to be independent of J-LL, but for brevity, it will not be considered here (for more information regarding the NIEP, there are several articles \cite{borobia1995,chu1998,egleston2004,niepSurvey,minc1988, soto2012} and monographs \cite{minc1988, bp1994} that discuss the problem). 

Let \( \Lambda = \{ \lambda_1, \dots, \lambda_n \} \subset \mathbb{C}\) (\(n \ge 2\)) be a list, let \( p (z) = \prod_{i=1}^n (z - \lambda_i) \), and let \( \Lambda' = \{ \mu_1,\dots, \mu_{n-1} \} \) denote the zeros of \( p' \). Monov \cite{monov2008} posed the following conjecture.

\begin{conj}[Monov] \label{conj:monov}
If \( \Lambda \) is realizable, then \( s_k (\Lambda') \ge 0 \) for all \( k \in \mathbb{N}\).
\end{conj}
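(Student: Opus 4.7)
The plan is to reduce the conjecture to an inequality in the moments \(s_j := s_j(\Lambda)\) via Newton's identities. Writing \(p(z) = \sum_{k=0}^n (-1)^k e_k z^{n-k}\), one has \(\tfrac{1}{n}p'(z) = \sum_{k=0}^{n-1} (-1)^k e'_k z^{n-1-k}\) with \(e'_k = \tfrac{n-k}{n} e_k\), so the elementary symmetric polynomials of \(\Lambda'\) are explicit scalar multiples of those of \(\Lambda\). Newton's identities then express \(e_k\) as a universal polynomial in \(s_1,\dots,s_k\) and dually express \(s_k(\Lambda')\) as a universal polynomial in \(e'_1,\dots,e'_k\), hence as a polynomial \(P_{n,k}(s_1,\dots,s_k)\). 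The conjecture thereby reduces to showing \(P_{n,k} \geq 0\) on the set of moment tuples arising from realizable lists.

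For the first few \(k\) this is straightforward. Vieta's formulas give \(s_1(\Lambda') = \tfrac{n-1}{n}\, s_1\), which is nonnegative by \eqref{moment}. For \(k=2\), a short calculation yields
\[ s_2(\Lambda') = \frac{1}{n^2}\, s_1^2 + \frac{n-2}{n}\, s_2, \]
manifestly nonnegative for \(n\ge 2\). These low-order cases are encouraging, so my next step would be to push the same Newton-identity bookkeeping to \(k=3,4\) in search of a stable pattern, and in particular to check whether \(P_{n,k}\) can always be written as a sum of monomials in \(s_1,\dots,s_k\) with nonnegative coefficients (which would settle the conjecture immediately from \eqref{moment}).

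The main obstacle I anticipate is that for general \(k\), \(P_{n,k}\) will contain monomials of mixed sign, so the moment condition alone will not suffice; the J-LL condition \eqref{J-LL}, which bounds products such as \(s_1^m\) by \(s_m\), will have to be invoked to absorb the negative contributions. If this combinatorial route proves intractable in full generality, I would fall back to Johnson's stronger conjecture and target classes of spectra whose realizing matrices have enough structure to produce a realizer for \(\Lambda'\) directly --- companion matrices (where the companion of \(\tfrac{1}{n}p'\) can be compared to that of \(p\)), Sule\u{\i}manova and Ciarlet spectra (where the Gauss--Lucas theorem gives geometric control of the \(\mu_j\)), and cases of similarity by a complex Hadamard matrix. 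Any such direct construction would imply Monov's conjecture on the corresponding class, and these are the targets I would prioritize before attempting the general case.
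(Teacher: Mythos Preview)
The statement you are attempting is Monov's \emph{conjecture}; the paper does not prove it in general, and nowhere claims to. What the paper does is exactly your ``fallback'' plan: it proves \(s_1(\Lambda')\ge 0\) and \(s_2(\Lambda')\ge 0\) directly (your computations for these match the paper's Proposition~\ref{thm:1stmoment} and the corollary following Theorem~\ref{thm:monov}), and then establishes Johnson's stronger conjecture --- and hence Monov's --- only for the special classes you list (companion-realizable spectra, Ciarlet spectra, complex Hadamard similarities, trace-zero generalized Sule\u{\i}manova spectra). So your proposal is not a proof of the conjecture, and the paper has no proof to compare it against; your outline of partial progress is essentially the paper's own program.

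One concrete correction to your plan: the hope that \(P_{n,k}\) might have only nonnegative coefficients fails already at \(k=3\). Monov's formula \eqref{monov} gives
\[
s_3(\Lambda') \;=\; \frac{n-3}{n}\,s_3 \;+\; \frac{3}{n^2}\,s_1 s_2 \;-\; \frac{1}{n^3}\,s_1^3,
\]
so the negative term \(-s_1^3/n^3\) is unavoidable. As you anticipated, J-LL rescues this case (since \(s_1^2\le n s_2\) gives \(s_1^3\le n s_1 s_2\), absorbing the negative term), but this confirms that the pure moment condition is insufficient and that any general argument along your combinatorial route must systematically invoke \eqref{J-LL}. No such argument is known; Monov's conjecture remains open for \(k\ge 3\) in general, and the paper's contribution is confined to the structured classes in your final paragraph.
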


The following is due to Johnson \cite{johnsonprivate}.

\begin{conj}[Johnson] \label{conj:johnson}
If \( \Lambda \) is realizable, then \( \Lambda' \) is realizable.
\end{conj}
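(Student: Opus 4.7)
The plan is to attack Johnson's conjecture not in full generality, but on several structured subclasses of realizable lists suggested by the abstract: lists realized by a nonnegative companion matrix, Suleĭmanova spectra, Ciarlet spectra, and lists realized by similarity via a complex Hadamard matrix. A universal proof would in particular give a universal construction of a nonnegative realizer of $\Lambda'$ from one of $\Lambda$, which (given the open status of the NIEP) seems out of reach; so I would lean on the extra structure of each subclass to pass from $\Lambda$ to $\Lambda'$.

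The companion-matrix case should be essentially immediate. If $p(z) = \prod_{i=1}^n (z-\lambda_i) = z^n + a_{n-1}z^{n-1} + \cdots + a_0$ admits a nonnegative companion realizer then $-a_k \ge 0$ for every $k$; but then $p'(z)/n = z^{n-1} + \sum_{k=1}^{n-1} \tfrac{k}{n}a_k z^{k-1}$ has subdiagonal coefficients $-\tfrac{k}{n}a_k \ge 0$ as well, so the companion matrix of $p'/n$ realizes $\Lambda'$. The Suleĭmanova case is almost as clean: $p$ is real-rooted, so by Rolle's theorem $\Lambda'\subset\mathbb{R}$ and the critical points interlace the roots. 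With $\lambda_1 \ge 0 > \lambda_2 \ge \cdots \ge \lambda_n$, interlacing forces $\mu_i < 0$ for $i \ge 2$, and comparing coefficients of $z^{n-2}$ in $p$ and $p'/n$ gives $s_1(\Lambda') = \tfrac{n-1}{n}s_1(\Lambda) \ge 0$, which then forces $\mu_1 \ge 0$; thus $\Lambda'$ is itself a Suleĭmanova list, and Suleĭmanova's classical construction produces a realizer.

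For the Ciarlet case I would combine the Gauss--Lucas theorem, which places $\Lambda'$ in the convex hull of $\Lambda$, with the defining Ciarlet inequality to show $\Lambda'$ inherits an analogous Perron-dominance bound; the $k/n$ rescaling of coefficients under differentiation should again be doing the work. For the Hadamard case, I would look for an explicit procedure that, given a complex Hadamard matrix $H$ of order $n$ with $H^{-1}\,\diag{\lambda_1,\dots,\lambda_n}\,H \ge 0$, produces a complex Hadamard matrix $K$ of order $n-1$ with $K^{-1}\,\diag{\mu_1,\dots,\mu_{n-1}}\,K \ge 0$; this is presumably where the differentiator and trace-vector machinery promised in the abstract enters, since the entries of the conjugated diagonal matrix can be read off as inner products of rows of $H$ against diagonal data.

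The main obstacle, and the reason the general conjecture remains a conjecture, is that differentiation is natural on the characteristic polynomial but foreign to the algebra of nonnegative matrices: there is no known canonical map from a nonnegative realizer of $\Lambda$ to one of $\Lambda'$. Each of the structured cases above succeeds only because the realizer is essentially determined by $p$ itself --- coefficients for companion matrices, a fixed template for Suleĭmanova, a conjugation by a highly constrained unitary for Hadamard --- so that differentiation interacts cleanly with that determination. Removing this structural anchor seems to require a genuinely new sufficient condition for realizability that is stable under passage to the derivative list, and I do not see how to produce such a condition.
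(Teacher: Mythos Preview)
Your overall framing---the conjecture remains open and the paper establishes it only on structured subclasses---is exactly right, and your companion-matrix and Sule\u{\i}manova arguments match the paper's almost verbatim (Theorems~3.6 and~3.9 there).

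For the Ciarlet case, however, your Gauss--Lucas sketch does not close. Interlacing gives \(\mu_{n-1}\ge\lambda_n\) and \(\mu_1\ge\lambda_2\), so the best you get is \((n-1)\mu_{n-1}+\mu_1\ge(n-1)\lambda_n+\lambda_2\), and \(n\lambda_n+\lambda_1\ge0\) gives no control over \(\lambda_2\) (take \(\lambda_1\) large and \(\lambda_2=\cdots=\lambda_n\) slightly negative). The paper bypasses this entirely via the \emph{d-companion matrix} of Cheung and Ng: for \(D=\diag{\lambda_2,\dots,\lambda_n}\), the \((n-1)\times(n-1)\) matrix \(A=D(I-\tfrac1nJ)+\tfrac{\lambda_1}{n}J\) has spectrum \(\Lambda'\), off-diagonal entries \(\tfrac1n(\lambda_1-\lambda_{i+1})\ge0\), and diagonal entries \(\tfrac1n\big((n-1)\lambda_{i+1}+\lambda_1\big)\). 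So \(A\ge0\) precisely when \((n-1)\lambda_n+\lambda_1\ge0\), which is weaker than Ciarlet's hypothesis.

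For the Hadamard case your target is off. You propose to manufacture a new complex Hadamard matrix \(K\) of order \(n-1\) with \(K^{-1}\diag{\mu_1,\dots,\mu_{n-1}}K\ge0\); the paper does something far simpler. If \(A=UDU^*\ge0\) with \(U=\tfrac{1}{\sqrt n}H\), then for every \(i\) the vector \(U^*e_i\) has all entries of modulus \(1/\sqrt n\), hence is a trace vector of \(D\), hence \(e_i\) is a trace vector of \(A\). Pereira's theorem then says the compression onto \((I-e_ie_i^\top)\mathbb{C}^n\)---which is exactly the principal submatrix \(A_{(i)}\)---has characteristic polynomial \(p_A'/n\). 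Since \(A_{(i)}\) is a submatrix of a nonnegative matrix, it is nonnegative, and you are done. No new Hadamard matrix is needed; the realizer of \(\Lambda'\) sits inside the realizer of \(\Lambda\).
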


It is clear that Johnson's conjecture implies Monov's conjecture. Cronin and Laffey \cite{croninlaffey2014} established Johnson's conjecture in the cases when \(n \le 4\), or when \(n \le 6\) and \( s_1(\Lambda)=0\), . 

Note that the converse of Conjecture \hyperref[conj:johnson]{\ref*{conj:johnson}} is not true; consider the list \( \Lambda = \{1, 1, -2/3,-2/3,-2/3 \}\)
and the polynomial \( p(z) = (z-1)^2(z+2/3)^3\). Then \( p'(z) = 5z^4 - 5z^2 - 20/27z +20/27 \), \( \Lambda' = \{ 1, 1/3, -2/3, -2/3 \} \), and \( \Lambda' \) is realizable \cite{loewylondon1978}. However, it is well-known that \( \Lambda \) is not realizable (see, e.g., Friedland \cite{friedland1978} and Paparella and Taylor \cite{pt2017} for a more general result). Note that there are instances when the constant of integration can be selected so that the zeros of an antiderivative form a realizable set (see Section \hyperref[sec:concrem]{\ref*{sec:concrem}}). 

In this work, we investigate and constructively prove Conjecture \hyperref[conj:johnson]{\ref*{conj:johnson}} for a variety of cases. First, we examine the \(k\)\textsuperscript{th} moments of the critical points of a realizable list and establish a sufficient condition for realizability via companion matrices. Next, we explore the d-companion matrix as a construction for a realizing matrix of the critical points and establish a sufficient condition that includes Ciarlet spectra \cite{ciarlet1968}. We then study differentiators following the work of Pereira \cite{pereira2003} and show that Johnson's conjecture holds for spectra realizable via complex Hadamard similarities. Along the way, we provide an alternate proof to a result of Malamud \cite{malamud2005} and extend the results on circulant matrices by Kushel and Tyaglov \cite{kusheltyaglov2016}. Additionally, we use these results to provide a new proof of a classical theorem on the interlacing roots and critical points for polynomials. Finally,  implications for further research are discussed.

\section{Notation}

Unless otherwise stated, \( \Lambda := \{ \lambda_1, \dots, \lambda_n \} \subset \mathbb{C}\) (\(n \ge 2\)), \( p (t) := \prod_{i=1}^n (t - \lambda_i) \in \bb{C}[t]\), and \( \Lambda' := \{ \mu \in \mathbb{C} \mid p'(\mu) = 0 \} = \{ \mu_1,\dots, \mu_{n-1} \} \). Furthermore, we refer to the \emph{critical points} of \(p\), i.e., the zeros of \( p' \),  as the critical points of \( \Lambda \). 

For \( n \in \mathbb{N}\), we let \(\langle n \rangle\) denote the set \(\{1, \dots, n\}\) .  

We let \(M_{m\times n}(\mathbb{F})\) denote the set of \(m\text{-by-}n\) matrices with entries from a field \(\mathbb{F}\). In the case when \( m = n\), \(M_{m\times n}(\mathbb{F})\) is abbreviated to \(M_n(\mathbb{F})\).

For \(A\in M_n(\mathbb{F}) \), we let \( \sigma(A) \) denote the \emph{spectrum} (i.e., list of eigenvalues) of \(A\);  we let \(A_{(i)}\) denote the \(i\)\textsuperscript{th} \emph{principal submatrix} of \(A\), i.e.,  \(A_{(i)}\) is the \((n-1)\)-by-\((n-1)\) matrix obtained by deleting the \(i\)\textsuperscript{th}-row and \(i\)\textsuperscript{th}-column of \(A\); and we let \(\tau(A)\) denote the \emph{normalized trace of \(A\)}, i.e., \( \tau(A):= (1/n) \trace A\). 

We let \(\diag{\lambda_1,\dots,\lambda_n}\) denote the \emph{diagonal matrix} whose \(i\)\textsuperscript{th} diagonal entry is \( \lambda_i\).

For \(A \in M_m(\mathbb{F})\) and \(B \in M_n(\mathbb{F})\), the \emph{direct sum of \(A\) and \(B\)}, denoted \(A \oplus B\), is the matrix
\[
	\begin{bmatrix}
 		A & 0 \\
 		0 & B
 	\end{bmatrix} \in M_{m\times n}(\mathbb{F}).
\]
The \emph{Kronecker product} of \( A\in M_{m\times n}(\mathbb{F})\) and \( B \in M_{p\times q}(\mathbb{F})\), denoted \( A \otimes B \), is the \( (mp) \text{-by-} (nq) \) block matrix
\[
  \begin{bmatrix} a_{11}B & \dots & a_{1n}B \\ \vdots & \ddots &
    \vdots \\ a_{m1}B & \dots & a_{mn}B
  \end{bmatrix}.
\] 

\section{Preliminary Results}\label{sec:prelim}

\begin{prop}
	\label{thm:selfconjugate}
		If \(\Lambda\)  is realizable, then \( \Lambda' = \overline{\Lambda'} \).
\end{prop}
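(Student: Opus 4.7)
The plan is to derive the self-conjugacy of $\Lambda'$ as a direct consequence of the self-conjugacy of $\Lambda$, passing through the polynomial $p$ and its derivative. Since the conclusion is about the zero set of $p'$ being closed under complex conjugation, the natural strategy is to argue that $p'$ has real coefficients.

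First, I would invoke the self-conjugacy condition \eqref{selfconjugacy}: because $\Lambda$ is realizable, $\overline{\Lambda} = \Lambda$ as multisets. This means that in $p(t) = \prod_{i=1}^n (t - \lambda_i)$, each non-real root is paired with its conjugate with the same multiplicity, so $p(t) = \overline{p(\bar t)}$. Writing $p(t) = \sum_{k=0}^n a_k t^k$, this yields $a_k = \overline{a_k}$ for each $k$, i.e., $p \in \mathbb{R}[t]$.

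Next, since differentiation preserves the field of coefficients, $p' \in \mathbb{R}[t]$ as well. Consequently, for any $\mu \in \mathbb{C}$ we have $p'(\mu) = 0$ if and only if $0 = \overline{p'(\mu)} = p'(\bar\mu)$, so the multiset of roots of $p'$ is invariant under conjugation; that is, $\Lambda' = \overline{\Lambda'}$.

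There is no real obstacle here beyond bookkeeping about multiplicities: one must ensure that the argument works at the level of multisets rather than just sets, which is automatic because complex conjugation is a field automorphism of $\mathbb{C}$ that fixes $\mathbb{R}$ pointwise and so preserves the factorization multiplicities of $p'$ over $\mathbb{C}$. Thus the proposition is a short corollary of \eqref{selfconjugacy} together with the fact that $(\mathbb{R}[t])' \subseteq \mathbb{R}[t]$.
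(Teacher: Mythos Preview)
Your argument is correct and is essentially the same as the paper's: the paper simply states that the result ``follows from the fact that the derivative of a real polynomial is real,'' and your write-up just unpacks the implicit step that realizability forces \(p \in \mathbb{R}[t]\) via \eqref{selfconjugacy}.
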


\begin{proof}
	Follows from the fact that the derivative of a real polynomial  is real.
\end{proof}

\begin{prop}
	\label{thm:1stmoment}
		If \(\Lambda\)  is realizable, then \(s_1(\Lambda') \ge 0\).
\end{prop}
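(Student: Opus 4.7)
The plan is to reduce $s_1(\Lambda')$ directly to $s_1(\Lambda)$ via Vieta's formulas and then invoke the moment condition \eqref{moment}. Writing
\[
p(t) = \prod_{i=1}^n (t-\lambda_i) = t^n - e_1 t^{n-1} + e_2 t^{n-2} - \cdots,
\]
where $e_k$ is the $k$\textsuperscript{th} elementary symmetric polynomial in $\lambda_1,\dots,\lambda_n$, I would compute $p'(t) = n t^{n-1} - (n-1)e_1 t^{n-2} + \cdots$, so that the monic polynomial $p'(t)/n$ has $t^{n-2}$-coefficient equal to $-\tfrac{n-1}{n} e_1$. By Vieta applied to $p'/n$, the sum of the roots of $p'$ is $\tfrac{n-1}{n} e_1$. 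Since $e_1 = \sum_i \lambda_i = s_1(\Lambda)$ (Newton's identity, or direct inspection), this yields the identity
\[
s_1(\Lambda') = \frac{n-1}{n}\, s_1(\Lambda).
\]

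To finish, I would observe that realizability of $\Lambda$ gives a nonnegative matrix $A$ with $\sigma(A) = \Lambda$, hence $s_1(\Lambda) = \operatorname{tr}(A) \ge 0$ because every diagonal entry of $A$ is nonnegative. Multiplying by the positive factor $(n-1)/n$ preserves the inequality, so $s_1(\Lambda') \ge 0$.

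There is essentially no obstacle here: the statement is a one-line consequence of Vieta's formulas combined with the nonnegativity of the trace of a nonnegative matrix. The only minor bookkeeping is ensuring the sign conventions in Vieta are applied consistently, and noting that the result is really just the $k=1$ instance of the moment condition \eqref{moment} pushed through the derivative.
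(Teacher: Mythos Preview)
Your proof is correct and is essentially the same as the paper's: both establish the identity \(s_1(\Lambda') = \tfrac{n-1}{n}\,s_1(\Lambda)\) and then invoke \(s_1(\Lambda)\ge 0\) for realizable \(\Lambda\). You simply spell out the Vieta computation and the trace argument that the paper leaves implicit.
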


\begin{proof}
Since
	\[\frac{1}{n} s_1(\Lambda)= \frac{1}{n-1} s_1(\Lambda'), \]
it follows that 
	\[ s_1(\Lambda') = \frac{n-1}{n} s_1(\Lambda) \ge 0. \qedhere \] 
\end{proof}

Monov \cite[Proposition 2.3]{monov2008} provides a formula that relates the moments of a list with the moments of its critical points. 

\begin{theorem}[Monov's moment formula]
\label{thm:monov}
If
\[
    	d_k (\Lambda) :=
    	\begin{vmatrix}
      	s_1(\Lambda)          & n               & 0                 & \dots & 0      \\
      	2s_2(\Lambda)         & s_1(\Lambda)    & n                 & \dots & 0      \\
      	\vdots                & \vdots          & \vdots            &       & \vdots \\
      	(k-1)s_{k-1}(\Lambda)  & s_{k-2}(\Lambda)  & s_{k-3}(\Lambda)  & \dots & n       \\
      	ks_k(\Lambda)         & s_{k-1}(\Lambda)  & s_{k-2}(\Lambda)  & \dots & s_1(\Lambda)
    	\end{vmatrix},
  \]
then
\begin{equation}
s_k(\Lambda') = s_k(\Lambda) +\left(-\frac{1}{n}\right)^k d_k (\Lambda). \label{monov}
\end{equation}
\end{theorem}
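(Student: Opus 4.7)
The plan is to derive a triangular recurrence for the differences \(t_j := s_j(\Lambda') - s_j(\Lambda)\) via the logarithmic derivative of \(p\), and then solve it by Cramer's rule, identifying the resulting numerator with \(d_k(\Lambda)\) up to sign.

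First, introduce the generating functions \(A(t) := p'(t)/p(t)\) and \(B(t) := p''(t)/p'(t)\). Partial fractions and the geometric series give the Laurent expansions (valid for \(|t| > \rho(\Lambda)\), or formally in \(\bb{C}((t^{-1}))\))
\[
A(t) = \sum_{k \ge 0} s_k(\Lambda)\, t^{-k-1}, \qquad B(t) = \sum_{k \ge 0} s_k(\Lambda')\, t^{-k-1},
\]
with the conventions \(s_0(\Lambda) = n\) and \(s_0(\Lambda') = n-1\). Differentiating the identity \(p'(t) = p(t) A(t)\) and dividing through by \(p'(t) = p(t) A(t)\) yields the key relation \(A'(t) = A(t)\bigl(B(t) - A(t)\bigr)\).

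Next, comparing the coefficient of \(t^{-k-2}\) on each side of this relation (and using \(t_0 = -1\)) produces the lower-triangular recurrence
\[
n\, t_k + \sum_{j=1}^{k-1} s_{k-j}(\Lambda)\, t_j = -k\, s_k(\Lambda), \qquad k \ge 1.
\]
The coefficient matrix of this system is lower triangular with diagonal \(n\), hence has determinant \(n^k\); by Cramer's rule, \(t_k = \det(C_k)/n^k\), where \(C_k\) is obtained by replacing its last column with the right-hand side vector \((-s_1, -2s_2, \dots, -k s_k)^T\).

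Finally, a cyclic column rotation bringing the last column of \(C_k\) to the front is a \(k\)-cycle of sign \((-1)^{k-1}\); factoring \(-1\) out of the resulting first column then exhibits the matrix as the Hessenberg form defining \(d_k(\Lambda)\). Thus \(\det(C_k) = (-1)^k d_k(\Lambda)\), giving \(t_k = (-1/n)^k d_k(\Lambda)\), which rearranges to \eqref{monov}. I expect the main technical obstacle to lie in the sign bookkeeping of this last step: the cycle-parity factor and the column-scaling factor must combine to exactly \((-1)^k\), not \((-1)^{k\pm 1}\), which demands careful tracking of row and column indices when identifying the rotated Cramer matrix with the displayed Hessenberg determinant.
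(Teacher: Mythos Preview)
The paper does not actually prove this theorem: it is stated with attribution to Monov \cite[Proposition 2.3]{monov2008} and then simply used. So there is no in-paper argument to compare your proposal against.

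That said, your argument is correct and self-contained. The logarithmic-derivative identity \(A'=A(B-A)\) is right (from \(p''=p'A+pA'\) and \(p'=pA\)), and the coefficient extraction at \(t^{-k-2}\) indeed gives
\[
n\,t_k+\sum_{j=1}^{k-1}s_{k-j}(\Lambda)\,t_j=-k\,s_k(\Lambda),
\]
after separating the \(j=0\) term \(s_k t_0=-s_k\) and the \(j=k\) term \(s_0 t_k=n\,t_k\). The Cramer-rule step is clean since the coefficient matrix is lower triangular with determinant \(n^k\), and your sign accounting in the last step is accurate: the cyclic shift contributes \((-1)^{k-1}\) and negating the first column contributes one more factor of \(-1\), for a total of \((-1)^k\). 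So \(t_k=(-1/n)^k d_k(\Lambda)\) as claimed. This is essentially the classical route via Newton-type identities and is likely close in spirit to Monov's original derivation, but I cannot confirm that from the present paper.
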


\begin{corollary}
If \(\Lambda\) is realizable, then \(s_2(\Lambda') \ge 0\).  
\end{corollary}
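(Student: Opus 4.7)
The plan is to apply Monov's moment formula (Theorem \ref{thm:monov}) directly with $k=2$ and then invoke the moment condition \eqref{moment} to conclude nonnegativity term-by-term.

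First, I would expand the $2\times 2$ determinant
\[
d_2(\Lambda) = \begin{vmatrix} s_1(\Lambda) & n \\ 2s_2(\Lambda) & s_1(\Lambda) \end{vmatrix} = s_1(\Lambda)^2 - 2n\, s_2(\Lambda),
\]
so that \eqref{monov} at $k=2$ reads
\[
s_2(\Lambda') = s_2(\Lambda) + \frac{1}{n^2}\bigl(s_1(\Lambda)^2 - 2n\, s_2(\Lambda)\bigr) = \frac{n-2}{n}\, s_2(\Lambda) + \frac{1}{n^2}\, s_1(\Lambda)^2.
\]

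Next, I would observe that each summand on the right-hand side is nonnegative: since $n \ge 2$, the coefficient $(n-2)/n$ is nonnegative; since $\Lambda$ is realizable, the moment condition \eqref{moment} gives $s_2(\Lambda) \ge 0$; and $s_1(\Lambda)^2$ is a square. This yields $s_2(\Lambda') \ge 0$.

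There is no real obstacle here — the computation is routine once Monov's formula is in hand, and the only substantive input is the moment condition \eqref{moment} applied at $k=2$. The only cosmetic care needed is handling the $n=2$ case, where the first term vanishes and the conclusion reduces to $s_2(\Lambda') = s_1(\Lambda)^2/4 \ge 0$, which is consistent.
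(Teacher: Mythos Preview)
Your proposal is correct and follows essentially the same approach as the paper: apply Monov's formula at $k=2$ to obtain $s_2(\Lambda') = \frac{n-2}{n}s_2(\Lambda) + \frac{1}{n^2}s_1^2(\Lambda)$, then conclude nonnegativity from $n\ge 2$ and the moment condition. The paper merely states the resulting identity without displaying the determinant expansion, but your computation fills in exactly that step.
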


\begin{proof}
A straightforward application of Theorem \ref{thm:monov} yields
\[
	s_2(\Lambda') = \frac{n-2}{n}s_2(\Lambda) + \frac{1}{n^2}s_1^2(\Lambda) \ge 0. \qedhere 
\]
\end{proof}

With this we see a particular case of the J-LL condition satisfied.

\begin{corollary}
  	\label{cor:JLL12}
  		If \(\Lambda\) is a list, then 
  		\begin{equation} 
  		s_1^2(\Lambda) \le n s_2(\Lambda) 			\label{jll21}
  		\end{equation}
  		if and only if 
  		\[ s_1^2(\Lambda') \le  (n - 1)s_{2}(\Lambda'). \]
\end{corollary}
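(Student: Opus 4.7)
The plan is to prove this by a direct algebraic manipulation, leveraging the two explicit formulas already obtained in the preceding proofs, namely
\[ s_1(\Lambda') = \frac{n-1}{n}\, s_1(\Lambda) \qquad \text{and} \qquad s_2(\Lambda') = \frac{n-2}{n}\, s_2(\Lambda) + \frac{1}{n^2}\, s_1^2(\Lambda). \]
Squaring the first identity gives $s_1^2(\Lambda') = \frac{(n-1)^2}{n^2}\, s_1^2(\Lambda)$, and multiplying the second by $n-1$ gives an expression for $(n-1)\,s_2(\Lambda')$ in terms of $s_2(\Lambda)$ and $s_1^2(\Lambda)$.

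Next, I would form the difference $(n-1)\,s_2(\Lambda') - s_1^2(\Lambda')$ and simplify. The terms involving $s_1^2(\Lambda)$ are $\frac{n-1}{n^2}s_1^2(\Lambda) - \frac{(n-1)^2}{n^2}s_1^2(\Lambda) = -\frac{(n-1)(n-2)}{n^2}s_1^2(\Lambda)$, so everything collapses into the clean factorization
\[ (n-1)\,s_2(\Lambda') - s_1^2(\Lambda') = \frac{(n-1)(n-2)}{n^2}\bigl(n\, s_2(\Lambda) - s_1^2(\Lambda)\bigr). \]

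Since $n\ge 2$, the scalar prefactor $\frac{(n-1)(n-2)}{n^2}$ is nonnegative, so for $n\ge 3$ it is strictly positive and the two inequalities are equivalent. For $n=2$ the prefactor vanishes, but in that case $\Lambda'$ is a singleton $\{\mu\}$ and $s_1^2(\Lambda')=\mu^2=s_2(\Lambda')$, so $s_1^2(\Lambda')\le (n-1)s_2(\Lambda')$ holds automatically; likewise $s_1^2(\Lambda)\le 2\,s_2(\Lambda)$ for any two-element list by the Cauchy–Schwarz inequality $(a+b)^2\le 2(a^2+b^2)$, so both sides of the biconditional are true and the equivalence still holds trivially.

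There is no substantive obstacle here—the entire argument is a one-line identity combined with a check of the degenerate boundary case. The only thing to be careful about is presenting the factorization in a form that makes both implications immediate, which is why I would display the identity in the symmetric form above rather than solving for one side in terms of the other.
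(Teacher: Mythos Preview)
Your approach is essentially the paper's: both start from the moment formulas \(s_1(\Lambda') = \frac{n-1}{n}s_1(\Lambda)\) and \(s_2(\Lambda') = \frac{n-2}{n}s_2(\Lambda) + \frac{1}{n^2}s_1^2(\Lambda)\) and reduce the equivalence to pure algebra. The paper writes a chain of \(\Longleftrightarrow\)'s, whereas your factored identity
\[
(n-1)\,s_2(\Lambda') - s_1^2(\Lambda') \;=\; \frac{(n-1)(n-2)}{n^2}\bigl(n\,s_2(\Lambda) - s_1^2(\Lambda)\bigr)
\]
makes the dependence on the sign of \(n s_2(\Lambda)-s_1^2(\Lambda)\) more transparent; otherwise the two arguments are interchangeable.

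One caveat on your \(n=2\) remark: the Cauchy--Schwarz step \((a+b)^2 \le 2(a^2+b^2)\) is valid only for real \(a,b\). For a genuinely complex two-element list, e.g.\ \(\Lambda=\{i,-i\}\), one has \(s_1^2(\Lambda)=0\) and \(2s_2(\Lambda)=-4\), so \eqref{jll21} fails, while \(\Lambda'=\{0\}\) trivially satisfies \(s_1^2(\Lambda')=0=(n-1)s_2(\Lambda')\); hence the biconditional is actually false at \(n=2\) for arbitrary complex lists. The paper's chain of equivalences has the same defect (its first step multiplies by \(\frac{n-2}{n^2}=0\)), so the corollary should really be read for \(n\ge 3\), or for real lists, where your argument is fine.
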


\begin{proof}
Following \eqref{monov}, 
\begin{align*}
	s_1^2(\Lambda) \le n s_2(\Lambda) 																			
	&\Longleftrightarrow 0 \le \frac{n-2}{n} s_2(\Lambda) - \frac{n-2}{n^2}s_1^2(\Lambda) 											\\
	&\Longleftrightarrow 0 \le \frac{n-2}{n} s_2(\Lambda) + \frac{1}{n^2}s_1^2(\Lambda) - \frac{1}{n^2}s_1^2(\Lambda) - \frac{n-2}{n^2}s_1^2(\Lambda) 	\\
	&\Longleftrightarrow 0 \le \frac{n-2}{n} s_2(\Lambda) + \frac{1}{n^2}s_1^2(\Lambda) - \frac{n-1}{n^2} s_1^2 (\Lambda)						\\
	&\Longleftrightarrow 0 \le (n-1) \left(\frac{n-2}{n} s_2(\Lambda) + \frac{1}{n^2}s_1^2(\Lambda)\right) - \left(\frac{n-1}{n} s_1(\Lambda)\right)^2 		\\
	&\Longleftrightarrow 0\le (n-1) s_2(\Lambda') - s_1^2(\Lambda')															\\
	&\Longleftrightarrow s_1^2(\Lambda') \le (n-1) s_2(\Lambda'). \qedhere
\end{align*}
\end{proof}

Inequality \eqref{jll21} is important for the realizability of lists with three elements \cite[Theorem 2]{loewylondon1978}, as well as \emph{generalized Sule\u{\i}manova spectra} \cite{laffeysmigoc2006}, which we define in the sequel.

A real list \(\Lambda\) is called a \emph{Sule\u{\i}manova spectrum} if \( s_1(\Lambda)\ge0 \) and \(\Lambda\) contains exactly one nonnegative entry.  A list \(\Lambda\) is called a \emph{generalized Sule\u{\i}manova spectrum} when \( s_1(\Lambda)\ge0 \) and \(\Lambda\) contains exactly one entry with nonnegative real part. 

Friedland \cite{friedland1978} and Perfect \cite{perfect1953} proved that all Sule{\u\i}manova spectra are realizable by a companion matrix. In particular, they showed that if $\Lambda$ is a Sule{\u\i}manova spectrum and 
\begin{equation*} 
p(t) = \prod_{i=1}^n (t - \lambda_i) = t^n+\sum_{k=1}^n a_{n-k} t^{n-k},
\end{equation*} 
then \( a_k \le 0\) for every \( k \in \langle n \rangle\). Consequently, the companion matrix
\begin{equation*}
C(p) := 
\begin{bmatrix}
0 & -a_0 \\
I_{n-1} & -a
\end{bmatrix},~a := \begin{bmatrix} a_1 & \cdots & a_{n-1} \end{bmatrix}^\top
\end{equation*} 
is nonnegative. 

Note that the class of spectra realizable via companion matrices contains spectra other than (generalized) Sule{\u\i}manova spectra (e.g., the \(n\)\textsuperscript{th} roots of unity for \(n \ge 4\)).

\begin{theorem}
\label{thm:sulecrit}
If \(\Lambda\) is a list such that \(C(p)\) is nonnegative, then \(\Lambda'\) is realizable. Furthermore, \(\Lambda'\) is realizable by a companion matrix.
\end{theorem}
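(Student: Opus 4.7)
The plan is to observe that the hypothesis $C(p) \ge 0$ is equivalent to a sign condition on the coefficients of $p$ that is preserved under differentiation (up to a harmless positive rescaling), so that the derivative's own companion matrix does the job.

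First I would recall that, by inspection of $C(p)$, the matrix $C(p)$ is entrywise nonnegative if and only if $-a_k \ge 0$, i.e.\ $a_k \le 0$, for every $k \in \{0, 1, \dots, n-1\}$, since the subdiagonal identity block $I_{n-1}$ is already nonnegative and the only other nonzero entries are $-a_0, -a_1, \dots, -a_{n-1}$ in the last column. So the hypothesis amounts to saying that every coefficient of $p$ below the leading term is nonpositive.

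Next I would differentiate:
\[
p'(t) = nt^{n-1} + (n-1)a_{n-1} t^{n-2} + \cdots + 2a_2 t + a_1,
\]
and form the monic polynomial $q(t) := p'(t)/n$, whose zeros are precisely the elements of $\Lambda'$. Its coefficients are $(k/n)\, a_k$ for $k = 1, \dots, n-1$, so from $a_k \le 0$ we get that every coefficient of $q$ below the leading term is nonpositive. Applying the same characterization as in the first step to $q$ in place of $p$, the companion matrix $C(q)$ is entrywise nonnegative and has spectrum $\Lambda'$, which proves both assertions at once.

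There is essentially no obstacle: the argument reduces to the elementary observation that termwise differentiation of a monic polynomial preserves the sign pattern of the lower-order coefficients after dividing by the (positive) leading coefficient $n$. The only thing worth flagging is being careful that $q$ is indeed monic of degree $n-1$, which is why we divide $p'$ by $n$ rather than appealing directly to $p'$.
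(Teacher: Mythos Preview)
Your proof is correct and follows essentially the same approach as the paper: observe that $C(p)\ge 0$ forces all lower-order coefficients of $p$ to be nonpositive, then note that the monic polynomial $p'/n$ inherits this sign pattern, so $C(p'/n)\ge 0$ realizes $\Lambda'$. Your version is simply a more explicit write-up of the same one-line argument.
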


\begin{proof}
By hypothesis, 
\begin{equation*} 
p(t) = t^n+\sum_{k=1}^n a_{n-k} t^{n-k},
\end{equation*} 
with \( a_k \le 0\) for every \( k \in \langle n \rangle\). Thus, the non-monic coefficients of \( \frac{1}{n}p'\) are nonpositive, i.e., \(C(p'/n) \ge 0\).
\end{proof}

\begin{remark}
If \(p\) is a polynomial such that \(C(p)\) is nonnegative, then, for \( 1 \le k \le n-1\), the zeros of \(p^{(k)}\) are realizable. Moreover, if the constants of integration are chosen to be nonpositve, then the roots of its successive anti-derivatives are also realizable (see Section \hyperref[sec:concrem]{\ref*{sec:concrem}}).
\end{remark}

\begin{theorem}
If \(\Lambda\) is a realizable generalized Sule\u{\i}manova spectrum such that \(s_1(\Lambda) = 0\), then \(\Lambda'\) is realizable.
\end{theorem}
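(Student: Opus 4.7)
The plan is to reduce to Theorem~\ref{thm:sulecrit} by showing $C(p) \ge 0$. First, realizability of $\Lambda$ combined with the moment condition~\eqref{moment} gives $s_2(\Lambda) \ge 0$, and together with the hypothesis $s_1(\Lambda) = 0$ this makes the inequality~\eqref{jll21} automatic: $s_1^2(\Lambda) = 0 \le n\, s_2(\Lambda)$. Since $\Lambda$ is a generalized Sule\u{\i}manova spectrum with $s_1(\Lambda) \ge 0$ satisfying~\eqref{jll21}, the realization theorem of Laffey and \v{S}migoc~\cite{laffeysmigoc2006} produces a nonnegative companion-matrix realization of $\Lambda$, i.e., $C(p) \ge 0$. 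An application of Theorem~\ref{thm:sulecrit} then yields a nonnegative companion-matrix realization of $\Lambda'$.

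The main delicate point is that the argument uses the realization in Laffey-\v{S}migoc specifically in companion-matrix form, so that Theorem~\ref{thm:sulecrit} applies. Should a direct argument be required, I would factor $p(t) = (t - \alpha) q(t)$, where $q(t) = \prod_{i \ge 2}(t - \lambda_i)$ is Hurwitz (hence all coefficients of $q$ are strictly positive by Gauss-Lucas applied to the right half plane), and observe that $s_1(\Lambda) = 0$ forces $\alpha$ to equal the subleading coefficient of $q$. The nonpositivity of every non-leading coefficient of $p$ then reduces to a chain of inequalities among the coefficients of $q$; the first two of these are exactly $s_2(\Lambda) \ge 0$ and $s_3(\Lambda) \ge 0$, while the remaining ones would need to be extracted from the full realizability of $\Lambda$ via Newton's identities and the higher moment and J-LL conditions---this is the technical hurdle of the alternate route.
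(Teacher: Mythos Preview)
Your approach is the same as the paper's---reduce to Theorem~\ref{thm:sulecrit} via the Laffey--\v{S}migoc construction---but you elide the one step the paper makes explicit. Laffey--\v{S}migoc do \emph{not} in general produce a companion-matrix realization; their realizing matrix for a realizable generalized Sule\u{\i}manova spectrum has the form $C + \alpha I$ with $C$ a trace-zero nonnegative companion matrix and $\alpha \ge 0$. The paper's proof then observes that $0 = s_1(\Lambda) = \trace(C + \alpha I) = n\alpha$, forcing $\alpha = 0$, so the realizing matrix is in fact $C = C(p)$ and Theorem~\ref{thm:sulecrit} applies. You assert the companion-matrix form directly (``produces a nonnegative companion-matrix realization of $\Lambda$, i.e., $C(p) \ge 0$'') without this justification, and that deduction is really the whole content of the proof.

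Separately, your detour through verifying~\eqref{jll21} is unnecessary: realizability of $\Lambda$ is already a hypothesis, so you do not need to re-establish it via the Laffey--\v{S}migoc sufficient conditions. The paper simply invokes the structural part of their result (every realizable generalized Sule\u{\i}manova spectrum admits a $C + \alpha I$ realization) and proceeds. Your alternate ``direct'' route in the second paragraph is, as you note, incomplete, and the paper does not pursue it.
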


\begin{proof}
Laffey and \v{S}migoc \cite{laffeysmigoc2006} showed that \(\Lambda\) is realizable by a matrix of the form \(C + \alpha I\), where \(C\) is a trace-zero companion matrix and \(\alpha\) is a nonnegative scalar. Since \(0 = s_1(\Lambda) = \trace (C + \alpha I) = \trace C + \alpha \trace I = n\alpha\), it follows that \(\alpha = 0\). Thus, \(\Lambda\) is realizable by a trace-zero companion matrix, so \(\Lambda'\) is realizable.
\end{proof}

The following theorem is a classical result on the relationship between  polynomial and its critical points when its zeros are real. 

\begin{theorem}
\label{thm:interlace}
Let \(p\) be a polynomial with real roots \(\lambda_1,\dots,\lambda_n\) and critical points \(\mu_1,\dots,\mu_{n-1}\), each listed in descending order. Then the critical points of \(p\) interlace the roots of \(p\), i.e.,
	\[
		\lambda_1 \ge \mu_1 \ge \lambda_2 \ge \dots \lambda_{n-1} \ge \mu_{n-1} \ge \lambda_n.
	\]
\end{theorem}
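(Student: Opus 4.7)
The plan is to deduce the interlacing statement from Rolle's theorem together with a careful multiplicity count that exhausts the $n-1$ critical points of $p'$.

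First I would reduce to the case in which the roots are grouped by distinct value. Let $\nu_1 > \nu_2 > \dots > \nu_r$ be the distinct real roots of $p$ with multiplicities $m_1,\dots,m_r$, so $m_1+\cdots+m_r = n$. A standard computation shows that if $\nu_j$ is a root of $p$ of multiplicity $m_j \ge 2$, then $\nu_j$ is a root of $p'$ of multiplicity exactly $m_j-1$. Collecting these contributions produces $\sum_{j=1}^r (m_j-1) = n - r$ critical points, counted with multiplicity, that coincide with roots of $p$.

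Next I would apply Rolle's theorem on each of the $r-1$ open intervals $(\nu_{j+1},\nu_j)$: since $p(\nu_j) = p(\nu_{j+1}) = 0$ and $p$ is differentiable, there exists a critical point $\xi_j \in (\nu_{j+1},\nu_j)$. This yields $r-1$ additional critical points, one strictly between each pair of consecutive distinct roots. Adding these to the $n-r$ critical points produced in the previous step gives $n-1$ critical points in total, matching $\deg p' = n-1$, so I have accounted for every root of $p'$.

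To convert this into the interlacing inequality as stated, I would list the roots of $p$ in descending order $\lambda_1 \ge \lambda_2 \ge \dots \ge \lambda_n$ and the critical points in descending order $\mu_1 \ge \dots \ge \mu_{n-1}$. Between indices $i$ and $i+1$ where $\lambda_i = \lambda_{i+1}$, this common value is a critical point of $p$ and fits naturally into the descending list at the correct position, giving $\lambda_i \ge \mu_i = \lambda_{i+1}$; between indices where $\lambda_i > \lambda_{i+1}$, the Rolle critical point $\xi_j$ lying strictly between them provides $\mu_i$ with $\lambda_i > \mu_i > \lambda_{i+1}$. In either case the required inequality $\lambda_i \ge \mu_i \ge \lambda_{i+1}$ holds.

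The main obstacle, and the only point that requires any care, is the bookkeeping: one must verify that the Rolle critical points together with the repeated-root critical points slot into the descending list of $\mu_i$'s in precisely the right positions so that the indices line up. This follows from the fact that both the Rolle points and the repeated-root points are naturally indexed by consecutive pairs $(\lambda_i, \lambda_{i+1})$, which is why the count $n-1$ is exact.
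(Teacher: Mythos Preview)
Your argument is correct: the Rolle-plus-multiplicity count exhausts the $n-1$ roots of $p'$, and the index bookkeeping you sketch is the right way to finish. The paper, however, deliberately avoids this route. It remarks that the Rolle approach works but that ``the details are somewhat cumbersome,'' and instead proves Theorem~\ref{thm:interlace} as an application of its differentiator machinery: one sets $A = H\diag{\lambda_1,\dots,\lambda_n}H^{-1}$ for any complex Hadamard matrix $H$, observes that $A$ is Hermitian (unitary similarity of a real diagonal matrix), invokes Theorem~\ref{thm:genmal} to get $\sigma(A_{(i)}) = \Lambda'$, and then applies the Cauchy interlacing theorem for Hermitian matrices.

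Your approach is more elementary and entirely self-contained, requiring nothing beyond single-variable calculus; its cost is the bookkeeping you flag at the end, which is straightforward but slightly tedious to write out in full. The paper's approach is shorter once the trace-vector framework is in place and serves the expository purpose of demonstrating that Theorem~\ref{thm:genmal} recovers a classical fact as a corollary; its cost is reliance on the Cauchy interlacing theorem and the Hadamard-matrix construction, which are heavier tools than the statement itself warrants if one only wants interlacing.
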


Theorem \ref{thm:interlace} can be proved via Rolle's theorem and some simple root counting arguments, but the details are somewhat cumbersome. Instead, we defer this proof to Stection \ref{sec:diff} to highlight the utility of the results presented there.

While Theorem \ref{thm:sulecrit} proves that the critical points of Sule\u{\i}manova spectra are realizable, the following is worth noting.

\begin{theorem}
If \(\Lambda\) is a Sule{\u\i}manova spectrum, then \(\Lambda'\) is a Sule\u{\i}manova spectrum.
\end{theorem}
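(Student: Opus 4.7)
The plan is to combine the interlacing theorem (Theorem~\ref{thm:interlace}) with the first-moment identity of Proposition~\ref{thm:1stmoment}, which together pin down the signs of all critical points.

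First I would order the entries of $\Lambda$ in descending order as $\lambda_1 \ge \lambda_2 \ge \cdots \ge \lambda_n$. The Sule\u{\i}manova hypothesis then says $\lambda_1 \ge 0$, $\lambda_i < 0$ for every $i \ge 2$, and $s_1(\Lambda) \ge 0$. Since $\Lambda$ is real, Theorem~\ref{thm:interlace} applies and gives real critical points $\mu_1 \ge \cdots \ge \mu_{n-1}$ with $\lambda_{i+1} \le \mu_i \le \lambda_i$. In particular, for each $i \ge 2$ the bound $\mu_i \le \lambda_i < 0$ forces $\mu_i$ to be strictly negative, so at most one entry of $\Lambda'$ (namely $\mu_1$) can be nonnegative.

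Next I would verify that $\mu_1$ is in fact nonnegative. Proposition~\ref{thm:1stmoment} yields $s_1(\Lambda') = \tfrac{n-1}{n} s_1(\Lambda) \ge 0$, while the previous step gives $\sum_{i=2}^{n-1} \mu_i \le 0$ (with equality only in the degenerate case $n=2$, where the sum is empty). Rearranging $0 \le s_1(\Lambda') = \mu_1 + \sum_{i=2}^{n-1} \mu_i$ therefore produces $\mu_1 \ge -\sum_{i=2}^{n-1} \mu_i \ge 0$. Combined with the sign information above, $\Lambda'$ contains exactly one nonnegative entry and has nonnegative trace, which is precisely the definition of a Sule\u{\i}manova spectrum.

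I do not anticipate a real obstacle: the heavy lifting has already been packaged into Theorem~\ref{thm:interlace} and Proposition~\ref{thm:1stmoment}, and the argument is just a matter of reading off signs. The only point requiring a moment of care is the edge case $n=2$, in which $\Lambda' = \{s_1(\Lambda)/2\}$ and both defining conditions of a Sule\u{\i}manova spectrum collapse to the single inequality $s_1(\Lambda) \ge 0$.
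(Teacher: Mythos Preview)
Your proposal is correct and follows essentially the same route as the paper: invoke Theorem~\ref{thm:interlace} to see that the interlaced critical points satisfy $\mu_i \le \lambda_i < 0$ for $i \ge 2$, and then use Proposition~\ref{thm:1stmoment} to force $\mu_1 \ge 0$. The only difference is that you spell out the inequality $\mu_1 \ge -\sum_{i \ge 2}\mu_i \ge 0$ explicitly, whereas the paper simply notes that $s_1(\Lambda') \ge 0$ is impossible if every critical point were negative.
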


\begin{proof}
Since \(p\) has all real roots, by Theorem \ref{thm:interlace} the critical points must all be real and interlace the zeros. Thus, there is at most one nonnegative critical point. Furthermore, by Theorem \ref{thm:1stmoment}, \(s_1(\Lambda') \ge 0\), so there must be at least one nonnegative critical point.
\end{proof}

\section{The D-Companion Matrix}

Just as the roots of a polynomial can be studied with a companion matrix, Cheung and Ng \cite{cheungng2006} introduced the d-companion matrix to study the critical points of a polynomial.

\begin{definition}[d-companion matrix]
Let \(p(t) = \prod_{i=1}^n (t - \lambda_i)\) be a polynomial of degree \(n\ge 2\) and let \( D := \diag{\lambda_2,\dots,\lambda_n} \in M_{n-1}(\mathbb{C})\). If \(I\) and \(J\) denote the \((n-1)\text{-by-} (n-1)\) identity and all-ones  matrices respectively, then the the matrix
\[ A = D \left(I - \frac{1}{n} J\right) + \frac{\lambda_1}{n} J \]
is called a \emph{d-companion matrix of $p$}. 
\end{definition}

Notice that, after relabeling the zeros of $p$, there are $n$ possible choices for the d-companion matrix. Cheung and Ng \cite[Theorem 1.1]{cheungng2006} showed that the spectrum of any d-companion matrix of a polynomial coincides with its multiset of critical points.

A straightforward calculation reveals that
\begin{equation}
\label{eq:DCompEnt}
a_{ij} = \frac{1}{n}
\begin{cases}
 (n-1) \lambda_{i+1} + \lambda_1 & i = j \\
 \lambda_1 - \lambda_{i+1} & i \ne j.
\end{cases}
\end{equation}

\begin{remark}
If \(\Lambda \subset \mathbb{R}\) is realizable, with \( \lambda_1 = \rho(\Lambda)\), then \eqref{eq:DCompEnt} reveals that \(\Lambda'\) is the spectrum of a \emph{Metzler matrix}, i.e., \(\Lambda'\) is realizable by a matrix of the form \(A=B-\alpha I\), with \(B \ge 0\) and \(\alpha \ge 0\).
\end{remark}

Ciarlet \cite{ciarlet1968} showed that the following condition is sufficient for realizability.

\begin{theorem}[Ciarlet]
  Let \(\Lambda = \{\lambda_1,\dots,\lambda_n\} \subset \mathbb{R}\)
  such that \(\lambda_1 \ge \dots \ge \lambda_n\).
  If \(n\lambda_n + \lambda_1 \ge 0\), then \(\Lambda\) is realizable.
\end{theorem}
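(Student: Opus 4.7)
The plan is to construct a nonnegative realizing matrix for \(\Lambda\) explicitly. The case \(\lambda_n \geq 0\) is trivial: every element of \(\Lambda\) is nonnegative, and \(\diag{\lambda_1,\ldots,\lambda_n}\) itself is a nonnegative realization. So henceforth I assume \(\lambda_n < 0\) and set \(\alpha := -\lambda_n > 0\); Ciarlet's hypothesis then rewrites as \(\lambda_1 \geq n\alpha\).

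I would then pass to the shifted, nonnegative list \(\mu_i := \lambda_i + \alpha\), which satisfies \(\mu_n = 0\) and \(\mu_1 = \lambda_1 + \alpha \geq (n+1)\alpha\). The task reduces to producing a nonnegative matrix \(B\) of order \(n\) with spectrum \(\{\mu_1, \ldots, \mu_n\}\) and each diagonal entry at least \(\alpha\), for then \(A := B - \alpha I\) is nonnegative with spectrum \(\Lambda\). I would look for such a \(B\) having the all-ones vector as its Perron eigenvector, so that \(B\) has constant row sums \(\mu_1\); any such \(B\) takes the form \(B = (\mu_1/n) J + C\) with \(C \mathbf{1} = 0\), and the spectral requirement on \(B\) is equivalent to \(C\) having eigenvalues \(\mu_2, \ldots, \mu_n\) on \(\mathbf{1}^\perp\). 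The cleanest way to enforce this is to set \(C := \sum_{k=2}^n \mu_k v_k v_k^{\top}\) for any real orthonormal basis \(\{v_2, \ldots, v_n\}\) of \(\mathbf{1}^\perp\).

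With this ansatz the diagonal lower bound \(B_{ii} \geq \alpha\) comes for free: \(C_{ii} = \sum_k \mu_k (v_k^{(i)})^2 \geq 0\), and Ciarlet's hypothesis supplies \(\mu_1/n \geq (n+1)\alpha/n > \alpha\). The real work---and where I expect the main obstacle---is the off-diagonal nonnegativity \(B_{ij} \geq 0\), i.e., \(C_{ij} \geq -\mu_1/n\) for \(i \neq j\). Since \(C_{ij} = \sum_k \mu_k v_k^{(i)} v_k^{(j)}\) is sign-indefinite, the estimate depends sensitively on the basis chosen; I would try a Helmert-type or real-Fourier-type basis, both of which spread mass evenly across coordinates and should deliver the bound using exactly the \((n+1)\alpha\)-slack guaranteed by Ciarlet's hypothesis. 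Should this direct calculation prove intractable, a natural fallback is an inductive argument: start from a \(2 \times 2\) realization of \(\{\lambda_1, \lambda_n\}\)---which the \(n = 2\) version of Ciarlet's theorem provides---and successively introduce \(\lambda_2, \ldots, \lambda_{n-1}\) via Brauer-Perfect-type augmentations, each step being justified because the largeness of \(\lambda_1\) in Ciarlet's hypothesis furnishes enough Perron-radius room to absorb the newly introduced eigenvalue.
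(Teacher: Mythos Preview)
The paper does not give its own proof of Ciarlet's theorem; the result is quoted from Ciarlet's 1968 paper as background for the d-companion-matrix discussion that follows. So there is no in-paper argument to compare against, and your proposal must stand on its own.

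On that score, the reduction to the shifted list \((\mu_i)\) and the ansatz \(B=\tfrac{\mu_1}{n}J+C\) with \(C=\sum_{k\ge 2}\mu_k v_kv_k^{\top}\) are sound, and your diagonal estimate is correct. The genuine gap is exactly where you flag it, and your optimism that a Helmert or real-Fourier basis ``should deliver the bound'' is misplaced. Concretely, take \(n=4\) and \(\Lambda=\{4,4,-1,-1\}\) (so \(\alpha=1\), \(\mu=(5,5,0,0)\)), with the real-Fourier basis \(v_2=(1,0,-1,0)/\sqrt2\), \(v_3=(0,1,0,-1)/\sqrt2\), \(v_4=(1,-1,1,-1)/2\). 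Assigning \(\mu_2=5\) to \(v_2\) gives \(C_{13}=-5/2\) and hence \(B_{13}=5/4-5/2<0\). Likewise, for \(n=3\) with \(\lambda_1=\lambda_2=3\), \(\lambda_3=-1\), the Helmert basis yields \(B_{12}=4/3-2<0\). In both examples a \emph{different} assignment of the \(\mu_k\) to the basis vectors happens to work, but your plan provides no rule for making that choice, and it is not evident that one exists in general. So the ``real work'' step is currently a hope, not an argument.

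Your fallback is closer to a clean proof, and a lighter variant of it finishes the job without any augmentation machinery: peel off each nonnegative \(\lambda_i\) (\(i\ge 2\)) as a \(1\times 1\) block. The residual list \(\{\lambda_1\}\cup\{\lambda_i:\lambda_i<0\}\) has exactly one nonnegative entry, and since Ciarlet's hypothesis gives \(\lambda_i\ge -\lambda_1/n\) for every \(i\), its trace is at least \(\lambda_1-(n-1)\lambda_1/n=\lambda_1/n\ge 0\). Thus it is a Sule\u{\i}manova spectrum and is realizable (e.g.\ by a nonnegative companion matrix); taking the direct sum with the peeled-off \(1\times 1\) blocks realizes \(\Lambda\).
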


From the d-companion matrix, we derive a similar necessary condition
for the realizability of critical points.

\begin{theorem}
 \label{thm:DCompIneq}
 Let \(\Lambda = \{\lambda_1,\dots,\lambda_n\} \subset \mathbb{R}\) be realizable and suppose that \(\lambda_1 \ge \dots \ge \lambda_n\).  If \((n-1)\lambda_n + \lambda_1 \ge 0\), then \(\Lambda'\) is realizable.
\end{theorem}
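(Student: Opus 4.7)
The plan is to use the d-companion matrix directly. Given the hypothesis $\lambda_1 \ge \lambda_2 \ge \cdots \ge \lambda_n$, I would form the d-companion matrix $A = D(I - \tfrac{1}{n}J) + \tfrac{\lambda_1}{n}J$ with $D = \diag{\lambda_2,\dots,\lambda_n}$, i.e., singling out the largest element $\lambda_1$. Since the spectrum of $A$ equals $\Lambda'$ by the Cheung--Ng theorem cited in the paper, it suffices to show that, under the hypothesis $(n-1)\lambda_n + \lambda_1 \ge 0$, this particular $A$ is entrywise nonnegative.

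Using the explicit formula \eqref{eq:DCompEnt}, I would check the two types of entries separately. For the off-diagonal entries, $a_{ij} = \tfrac{1}{n}(\lambda_1 - \lambda_{i+1})$, and these are nonnegative because $\lambda_1$ is the maximum of $\Lambda$. For the diagonal entries, $a_{ii} = \tfrac{1}{n}((n-1)\lambda_{i+1} + \lambda_1)$; since $\lambda_{i+1} \ge \lambda_n$ and $n-1 \ge 0$, we have $(n-1)\lambda_{i+1} + \lambda_1 \ge (n-1)\lambda_n + \lambda_1 \ge 0$ by hypothesis. Hence $A \ge 0$, and $\Lambda' = \sigma(A)$ is realizable.

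There is no real obstacle here: the hypothesis $(n-1)\lambda_n + \lambda_1 \ge 0$ is essentially engineered to be exactly the condition that the worst diagonal entry of this particular d-companion matrix is nonnegative, and the ordering assumption takes care of the off-diagonal entries automatically. I would also note in passing that the realizability of $\Lambda$ itself is not actually invoked in the argument, so the conclusion holds under just the ordering and the inequality; stating the theorem with ``realizable'' in the hypothesis simply places it inside the framework of Johnson's conjecture. It may also be worth remarking that the hypothesis here is strictly weaker than Ciarlet's sufficient condition $n\lambda_n + \lambda_1 \ge 0$, paralleling the reduction in dimension from $n$ to $n-1$ when passing from $\Lambda$ to $\Lambda'$.
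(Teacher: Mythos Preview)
Your argument is correct and is exactly the approach the paper takes: the paper's proof is the single line ``Immediate in view of \eqref{eq:DCompEnt},'' and you have simply spelled out the two cases (off-diagonal entries nonnegative by the ordering, diagonal entries nonnegative by the hypothesis) that this sentence leaves implicit. Your additional remarks about the realizability hypothesis being unused and the comparison with Ciarlet's condition are accurate and worth including.
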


\begin{proof}
Immediate in view of \eqref{eq:DCompEnt}.
\end{proof}

\begin{corollary}
  	The critical points of a Ciarlet spectrum are realizable.
\end{corollary}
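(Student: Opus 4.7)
The plan is to combine Ciarlet's theorem (which guarantees realizability of $\Lambda$) with Theorem \ref{thm:DCompIneq} (which upgrades realizability of $\Lambda$ to realizability of $\Lambda'$ under a slightly weaker linear inequality on the extreme eigenvalues). Concretely, given a Ciarlet spectrum $\Lambda = \{\lambda_1,\dots,\lambda_n\} \subset \mathbb{R}$ with $\lambda_1 \ge \dots \ge \lambda_n$ and $n\lambda_n + \lambda_1 \ge 0$, Ciarlet's theorem already asserts that $\Lambda$ is realizable, so the only nontrivial task is to verify the hypothesis of Theorem \ref{thm:DCompIneq}, namely that $(n-1)\lambda_n + \lambda_1 \ge 0$.

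To verify this, I would split into cases on the sign of $\lambda_n$. If $\lambda_n \ge 0$, then $\lambda_1 \ge \lambda_n \ge 0$, and $(n-1)\lambda_n + \lambda_1$ is a sum of nonnegative terms. If $\lambda_n < 0$, write
\[
(n-1)\lambda_n + \lambda_1 = (n\lambda_n + \lambda_1) - \lambda_n \ge -\lambda_n > 0,
\]
using Ciarlet's inequality $n\lambda_n + \lambda_1 \ge 0$. Either way, the d-companion hypothesis holds.

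Having established $(n-1)\lambda_n + \lambda_1 \ge 0$, Theorem \ref{thm:DCompIneq} immediately yields that $\Lambda'$ is realizable (in fact, as the remark preceding it observes, via a Metzler-type matrix obtained from the d-companion construction). There is no real obstacle here: the entire content of the corollary is the elementary observation that Ciarlet's inequality is strictly stronger than the d-companion inequality, so this is a one-line deduction rather than a genuine proof.
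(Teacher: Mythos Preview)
Your proposal is correct and is exactly the argument the paper has in mind: the corollary is stated without proof precisely because Ciarlet's inequality $n\lambda_n+\lambda_1\ge 0$ both gives realizability of $\Lambda$ (by Ciarlet's theorem) and immediately implies the weaker inequality $(n-1)\lambda_n+\lambda_1\ge 0$ needed for Theorem~\ref{thm:DCompIneq}. Your case split on the sign of $\lambda_n$ is a clean way to make that implication explicit.
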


Although not directly applicable to the primary problem considered here, given a polynomial with complex roots, a construction of the d-companion matrix with real entries may be useful elsewhere.

\begin{theorem}
If \(\Lambda = \{\lambda_1,\dots,\lambda_m,\mu_1, \bar\mu_1,\dots,\mu_n,\bar\mu_n \}\), where \(\Im(\lambda_i) =0,~\forall i \in \langle m \rangle \) and \( \Im(\mu_i)\ne 0,~\forall i\in \langle n \rangle\), then the spectrum of the matrix (whose entries are real)
\[ B \left(I - \frac{1}{m+2n} K\right) + \frac{\lambda_1}{m + 2n} K, \]
is the critical points of \(\Lambda\), where

  \[
    B =
    \begin{bmatrix}
      \lambda_2           \\
      & \ddots            \\
      & &  \lambda_{m}
    \end{bmatrix}
    \oplus
    \left(
      \bigoplus_{i=1}^n
      \begin{bmatrix}
        \Re\mu_i & \Im\mu_i \\
        -\Im\mu_i & \Re\mu_i
      \end{bmatrix}
    \right),
  \]

  \[
    K =
    \begin{bmatrix}
      J_{(m-1)\times(m-1)} & G \\
      G^\top & C
    \end{bmatrix},
  \]

  \[
    G = J_{(m-1) \times n} \otimes
    \begin{bmatrix}
      \sqrt{2} & 0
    \end{bmatrix},
  \]

  \noindent
  and

  \[
    C = J_{n\times n} \otimes
    \begin{bmatrix}
      2 & 0 \\
      0 & 0
    \end{bmatrix}.
  \]
\end{theorem}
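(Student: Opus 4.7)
The plan is to exhibit a similarity transformation that turns the proposed real matrix into a standard d-companion matrix of \(p(t) = \prod_{i=1}^m (t-\lambda_i) \prod_{j=1}^n (t-\mu_j)(t-\bar{\mu}_j)\) (with \(\lambda_1\) playing the distinguished role); the conclusion will then follow from Cheung and Ng's theorem \cite{cheungng2006}. Throughout write \(N := m + 2n\), so every matrix in sight is \((N-1)\)-by-\((N-1)\).

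First I would introduce the unitary \(P_2 := \tfrac{1}{\sqrt{2}}\begin{bmatrix}1 & 1 \\ i & -i\end{bmatrix}\) and verify the block identity
\[
    P_2^{-1}\begin{bmatrix}\Re\mu_j & \Im\mu_j \\ -\Im\mu_j & \Re\mu_j\end{bmatrix} P_2 = \begin{bmatrix}\mu_j & 0 \\ 0 & \bar\mu_j\end{bmatrix}.
\]
Setting \(P := I_{m-1} \oplus P_2^{\oplus n}\), this immediately yields \(P^{-1} B P = D\), where \(D := \diag{\lambda_2, \ldots, \lambda_m, \mu_1, \bar\mu_1, \ldots, \mu_n, \bar\mu_n}\) is the diagonal matrix used in the ordinary d-companion matrix of \(p\) based at \(\lambda_1\).

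The main computational step is to establish \(P^{-1} K P = J_{N-1}\). To see this, first observe that \(K\) has rank one: using \(J_{r\times s} = \mathbf{1}_r \mathbf{1}_s^\top\) together with the Kronecker identity \((\mathbf{1}_r \mathbf{1}_s^\top) \otimes (xy^\top) = (\mathbf{1}_r \otimes x)(\mathbf{1}_s \otimes y)^\top\) on each block, one finds \(K = w w^\top\), where \(w\) is the \((N-1)\)-vector whose first \(m-1\) entries equal \(1\) and whose last \(2n\) entries form the pattern \((\sqrt{2},0,\sqrt{2},0,\ldots,\sqrt{2},0)\). A direct check then gives
\[
    P_2^{-1}(\sqrt{2},\,0)^\top \;=\; P_2^{\top}(\sqrt{2},\,0)^\top \;=\; (1,\,1)^\top,
\]
so \(P^{-1} w = P^\top w = \mathbf{1}_{N-1}\); consequently \(P^{-1} K P = (P^{-1}w)(w^\top P) = \mathbf{1}_{N-1}\mathbf{1}_{N-1}^\top = J_{N-1}\).

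Combining the two similarities,
\[
    P^{-1}\Bigl[B\bigl(I - \tfrac{1}{N} K\bigr) + \tfrac{\lambda_1}{N} K\Bigr]P \;=\; D\bigl(I - \tfrac{1}{N} J_{N-1}\bigr) + \tfrac{\lambda_1}{N} J_{N-1},
\]
and the right-hand side is the Cheung--Ng d-companion matrix of \(p\), whose spectrum is \(\Lambda'\). Since similarity preserves the spectrum, the result follows. The only nontrivial ingredient is the rank-one decomposition of \(K\); once that is in hand the two small identities involving \(P_2\) drop out and the rest is mechanical bookkeeping.
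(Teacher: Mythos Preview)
Your proof is correct and follows essentially the same approach as the paper: both conjugate the d-companion matrix by the block-diagonal unitary $I_{m-1}\oplus V^{\oplus n}$ with $V=\tfrac{1}{\sqrt{2}}\begin{bmatrix}1&1\\ i&-i\end{bmatrix}$, the only difference being that you work in the inverse direction and supply the details (the rank-one factorization $K=ww^\top$ and the verification $P^{-1}KP=J$) that the paper leaves implicit.
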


\begin{proof}
If
  \[
    V := \frac{1}{\sqrt{2}}
    \begin{bmatrix}
      1 & 1 \\
      i & -i
    \end{bmatrix},
  \]
then \(V\) is unitary and, for any complex number \(z\),
  \[
    V \diag{z,\bar{z}}V^* =
    \begin{bmatrix}
      \Re z & \Im z \\
      -\Im z & \Re z
    \end{bmatrix}.\]

If \(A\) is the the d-companion matrix \(U := I_{m\times m} \oplus \bigoplus_{i=1}^n V\), then \(U\) is unitary and \(UAU^*\) is the desired matrix.
\end{proof}

\section{Differentiators and Complex Hadamard Matrices}\label{sec:diff}

Pereira \cite{pereira2003}, following the work of Davis \cite{davis1959}, studied differentiators and introduced the concept of a trace vector to solve several unsolved conjectures in the geometry of polynomials. Although Pereira studied differentiators and trace vectors in the setting of linear operators on finite dimensional Hilbert spaces, we only consider these objects for matrices and summarize some of his results in this setting.

\begin{definition}
For \(A \in M_n(\mathbb{C})\) and a unit vector \(z \in \mathbb{C}^n\), let \(P = P(z) := I - zz^* \in M_n(\mathbb{C})\).  If \(B := PAP|_{P\mathbb{C}^n}\) (the matrix \(B\) is called the \emph{compression of \(A\) onto \(P\mathbb{C}^n\)}), then \(P\) is called a \emph{differentiator (of \(A\))} if
\[ p_B(t) = \frac{1}{n} p'_A(t), \] 
where \(p_M\) denotes the the characteristic polynomial of \(M \in M_n(\mathbb{C})\).
\end{definition}

\begin{definition}
Let \(A \in M_n(\mathbb{C})\) and \(z\in \mathbb{C}^n\). Then \(z\) is called a \emph{trace vector (of \(A\))} if \(z^*A^kz = \tau(A^k)\), for every nonnegative integer \(k\).
\end{definition}

\begin{remark}
Corresponding to the case when \(k=0\), it is clear that all trace vectors have unit length. 
\end{remark}

The following result is due to Pereira.

\begin{theorem}[Pereira {\cite[Theorem 2.5]{pereira2003}}]
If \(A \in M_n(\mathbb{C})\) and \(z \in \mathbb{C}^n\) is a unit vector, then \(P\) is a differentiator of \(A\) if and only if \(z\) is a trace vector of \(A\).
\end{theorem}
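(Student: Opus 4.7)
The plan is to translate the differentiator condition into an identity of rational functions of $t$, and then use the Laurent expansion at infinity to read off the trace vector condition as the equality of all resulting power-series coefficients.

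After a unitary change of basis (which preserves characteristic polynomials, the compression $B$, and the trace vector condition), I may assume $z = e_1$, so that $P = I - e_1 e_1^*$, the compression $B$ is the first principal submatrix $A_{(1)}$, and one can write
\[
A = \begin{bmatrix} \alpha & r^* \\ c & B \end{bmatrix}, \quad \alpha = z^* A z.
\]
The Schur complement formula then gives
\[
p_A(t) = p_B(t) \cdot \bigl[(t - \alpha) - r^*(tI - B)^{-1} c\bigr],
\]
while the block matrix inversion formula identifies the reciprocal of the bracketed scalar with the $(1,1)$-entry of $(tI - A)^{-1}$. Consequently,
\[
z^*(tI - A)^{-1} z = \frac{p_B(t)}{p_A(t)}.
\]

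On the other hand, Jacobi's formula yields $p_A'(t)/p_A(t) = \trace\bigl((tI - A)^{-1}\bigr)$. Combining this with the previous identity, the differentiator condition $p_B(t) = \tfrac{1}{n}\,p_A'(t)$ is equivalent to the identity
\[
z^*(tI - A)^{-1} z = \tau\bigl((tI - A)^{-1}\bigr)
\]
of rational functions in $t$.

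Finally, for $|t|$ sufficiently large the Neumann series gives $(tI - A)^{-1} = \sum_{k \ge 0} A^k / t^{k+1}$; matching coefficients of $t^{-(k+1)}$ on both sides of the last identity yields $z^* A^k z = \tau(A^k)$ for every $k \ge 0$, which is exactly the trace vector condition. The main delicacy lies in the two block-matrix identities used to relate $p_B/p_A$ to the resolvent entry $z^*(tI - A)^{-1}z$; once these are in hand, the remainder of the argument is formal.
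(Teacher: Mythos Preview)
The paper does not prove this theorem; it is merely quoted from Pereira's paper and used as a black box. There is therefore no in-paper proof to compare against.

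Your argument is correct. The unitary reduction to $z=e_1$ is legitimate because both the differentiator condition and the trace-vector condition are invariant under simultaneous unitary conjugation of $A$ and $z$. The Schur-complement identity and the block-inverse formula (each valid for all but finitely many $t$, hence as identities of rational functions) yield $z^{*}(tI-A)^{-1}z=p_{B}(t)/p_{A}(t)$, and together with Jacobi's formula $p_{A}'(t)/p_{A}(t)=\trace\bigl((tI-A)^{-1}\bigr)$ this makes the differentiator condition equivalent to $z^{*}(tI-A)^{-1}z=\tau\bigl((tI-A)^{-1}\bigr)$. Expanding both sides as $\sum_{k\ge 0}A^{k}/t^{k+1}$ for large $|t|$ and comparing coefficients gives exactly $z^{*}A^{k}z=\tau(A^{k})$ for all $k\ge 0$; conversely, equality of all coefficients forces equality of the two rational functions. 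This resolvent-and-Laurent-series approach is in fact the standard route to Pereira's theorem, so your proof is not only valid but close in spirit to the original.
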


In addition, Pereira \cite[Theorem 2.10]{pereira2003} shows that every matrix possesses a trace vector, i.e., every matrix possesses a differentiator. 

\begin{example}
If \(e_i\) is a trace vector of \(A \in M_n(\mathbb{C})\), where \(e_i\) (\(1 \le i \le n\)) denotes the \(i\textsuperscript{th}\) canonical basis vector of \(\mathbb{C}^n\), and \(P = I - e_ie_i^\top\), then the compression of \(A\) onto \(P\mathbb{C}^n\) is \(A_{(i)}\).
\end{example}

\begin{prop}
 Let \(A \in M_n(\mathbb{C})\) and let \(z\) be a trace vector of \(A\). If \(\alpha\) is a complex number such that \(|\alpha| = 1\), then \(\alpha z\) is a trace vector of \(A\).
\end{prop}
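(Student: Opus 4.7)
The plan is to verify the defining identity of a trace vector directly for $\alpha z$, using the fact that conjugate-transposition pulls out the complex scalar as its conjugate. First I would write out $(\alpha z)^* A^k (\alpha z)$ for an arbitrary nonnegative integer $k$ and simplify, using bilinearity and the hypothesis $z^*A^kz = \tau(A^k)$. Explicitly,
\[
(\alpha z)^* A^k (\alpha z) = \bar\alpha \alpha \, z^* A^k z = |\alpha|^2 \, z^*A^kz = z^*A^kz = \tau(A^k),
\]
where the final equality uses that $z$ is a trace vector of $A$ and the penultimate equality uses $|\alpha|=1$.

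Since this holds for every $k \ge 0$, the vector $\alpha z$ satisfies the definition of a trace vector of $A$. One should also note that $\|\alpha z\| = |\alpha|\|z\| = 1$, so the unit-length requirement (implicit in the preceding remark) is preserved. There is no real obstacle here: the proposition is a direct consequence of the sesquilinearity of the inner product together with $|\alpha|^2 = 1$.
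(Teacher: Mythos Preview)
Your proof is correct and follows essentially the same approach as the paper: both compute $(\alpha z)^* A^k (\alpha z) = \bar\alpha \alpha\, z^* A^k z = z^* A^k z = \tau(A^k)$ directly. Your inclusion of the intermediate step $|\alpha|^2$ and the remark about unit length are minor elaborations on the same one-line argument.
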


\begin{proof}
For any nonnegative integer \(k\), notice that
\[
(\alpha z)^*A^k (\alpha z) = \bar{\alpha} \alpha z^*A^kz = z^*A^kz = \tau(A^k). \qedhere
\]
\end{proof}

A stronger result is available for diagonal matrices. 

\begin{lemma}
\label{lem:diagtrace}
Let \(D \in M_n(\mathbb{C})\) be a diagonal matrix and \(z \in \mathbb{C}^n\). If \(|z_i|=\frac{1}{\sqrt n}\) for all \(i \in \langle n \rangle\), then \(z\) is a trace vector of \(D\). 
\end{lemma}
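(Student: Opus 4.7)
The plan is to verify the defining identity $z^* D^k z = \tau(D^k)$ directly by exploiting the fact that powers of a diagonal matrix remain diagonal, so the quadratic form collapses to a weighted sum of the diagonal entries of $D^k$ with weights $|z_i|^2$.

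First I would write $D = \diag{d_1, \dots, d_n}$, note that $D^k = \diag{d_1^k, \dots, d_n^k}$ for every nonnegative integer $k$, and expand $z^* D^k z$ in coordinates as $\sum_{i=1}^n |z_i|^2 d_i^k$. Then I would substitute the hypothesis $|z_i|^2 = 1/n$ to obtain $\frac{1}{n} \sum_{i=1}^n d_i^k = \frac{1}{n} \trace(D^k) = \tau(D^k)$, which is exactly the trace vector condition. Since the definition of a trace vector presupposes unit length (as noted in the remark preceding the lemma), I would also observe that $\|z\|^2 = \sum_{i=1}^n |z_i|^2 = n \cdot (1/n) = 1$, confirming $z$ is a unit vector and so the hypothesis of the definition is met.

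There is essentially no obstacle here: the proof is a one-line computation made possible because diagonality decouples the coordinates of $z$, so only the moduli $|z_i|$ enter the quadratic form $z^* D^k z$. The only subtlety worth flagging in writing is that the phases of the $z_i$ are completely unconstrained, which is why the subsequent development (and the later use in studying complex Hadamard matrices, whose rows have entries of constant modulus $1/\sqrt{n}$) will apply to a rich family of unitary matrices simultaneously diagonalizing suitable compressions.
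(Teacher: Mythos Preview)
Your proposal is correct and matches the paper's proof essentially line for line: both expand \(z^* D^k z\) in coordinates, use \(|z_i|^2 = 1/n\), and identify the result with \(\tau(D^k)\). The only addition is your explicit check that \(\|z\|=1\), which the paper leaves implicit (it is the \(k=0\) case).
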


\begin{proof}
If \(k\) is a nonnegative integer, then
\[
z^* D^k z = \sum_{i=1}^n \overline{z}_i  z_i  d_{ii}^k = \sum_{i=1}^n |z_i|^2  d_{ii}^k = \frac{1}{n} \sum_{i=1}^n d_{ii}^k = \tau(D^k), 
\]
i.e., \(z\) is a trace vector of \(D\).
\end{proof}

\begin{remark}
The converse of Lemma \ref{lem:diagtrace} fails; indeed, if \(D\) is a diagonal matrix, then 
\[ \left(\frac{e_i}{\sqrt{n}}\right)^* D \left(\frac{e_i}{\sqrt{n}}\right) = \frac{d_{ii}}{n} = \tau(D),~\forall i \in \langle n \rangle. \]
\end{remark}

Although not relevant to this work, we note the following result. 

\begin{prop}
If \(z \in \mathbb{C}^n\) is a trace vector of every \(n\)-by-\(n\) diagonal matrix, then \(|z_i|=\frac{1}{\sqrt n}\) for all \(i \in \langle n \rangle\).
\end{prop}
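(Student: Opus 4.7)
The plan is to exploit only the $k=1$ case of the trace-vector condition, since it already encodes enough information when we are allowed to vary the diagonal matrix freely. For any diagonal matrix $D = \diag{d_1, \dots, d_n}$, a direct computation gives
\[
z^* D z = \sum_{i=1}^n |z_i|^2 d_i \quad \text{and} \quad \tau(D) = \frac{1}{n}\sum_{i=1}^n d_i,
\]
so the hypothesis that $z$ is a trace vector of every diagonal matrix forces the identity $\sum_{i=1}^n |z_i|^2 d_i = \frac{1}{n}\sum_{i=1}^n d_i$ to hold for all choices of $d_1,\dots,d_n \in \mathbb{C}$.

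From here I would simply specialize: fix $j \in \langle n \rangle$ and take the diagonal matrix $E_{jj} := \diag{\delta_{1j}, \dots, \delta_{nj}}$ having a single $1$ in position $(j,j)$. The displayed identity then collapses to $|z_j|^2 = \tfrac{1}{n}$, i.e., $|z_j| = \tfrac{1}{\sqrt n}$. Since $j$ was arbitrary, the conclusion follows.

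There is no substantive obstacle: the higher powers $k \geq 2$ in the definition of a trace vector are not needed (they give redundant information once all $k=1$ identities are available), and the $k=0$ identity only recovers $\|z\| = 1$, which is consistent with what we prove. The whole argument fits in a few lines.
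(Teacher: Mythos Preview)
Your proof is correct and essentially identical to the paper's: both specialize to the diagonal matrix $e_j e_j^\top$ (your $E_{jj}$) and read off $|z_j|^2 = \tfrac{1}{n}$ from the trace-vector identity. The paper writes the computation with a generic power $D^k$, but since $e_j e_j^\top$ is idempotent this is no more than your $k=1$ argument.
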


\begin{proof}
Suppose that \(z \in \mathbb{C}^n\) is a trace vector of all diagonal matrices and let \( i \in \langle n \rangle\). If \(D = e_i e_i^\top\), then 
\[ \frac{1}{n} = \tau(D^k) = z^* D^k z = \sum_{j=1}^n \overline{z}_j  z_j  d_{jj}^k = |z_i|^2 ,\]
and the result is established upon taking square-roots throughout.
\end{proof}

\begin{lemma}
\label{lem:scalsim}
If \(Q = \alpha S\), with \(\alpha \in\mathbb{C}\backslash\{0\}\) and  \(S \in {GL}_n(\mathbb{C})\), then for \(A \in M_n(\mathbb{C})\), \(QAQ^{-1} = SAS^{-1}\).
\end{lemma}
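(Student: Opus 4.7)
The plan is to unpack the definition of $Q^{-1}$ in terms of $S^{-1}$ and exploit the commutativity of scalar multiplication with matrix multiplication. This is essentially a one-line calculation, so my proposal is more about bookkeeping than strategy.

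First I would observe that $Q$ is invertible: since $\alpha \ne 0$ and $S \in GL_n(\mathbb{C})$, the product $\alpha S$ lies in $GL_n(\mathbb{C})$ as well, with inverse $Q^{-1} = \alpha^{-1} S^{-1}$. Next I would substitute this directly into the conjugation $QAQ^{-1}$, obtaining
\[
QAQ^{-1} = (\alpha S) A (\alpha^{-1} S^{-1}) = (\alpha \alpha^{-1}) S A S^{-1} = S A S^{-1},
\]
where the middle equality uses that complex scalars commute with every matrix and may therefore be pulled out and collected. This completes the argument.

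There is no real obstacle here; the only thing to be careful about is noting that $Q^{-1}$ exists in the first place (which requires both $\alpha \ne 0$ and $S \in GL_n(\mathbb{C})$) before writing down the formula for it. The content of the lemma is really the observation that nonzero scalar multiples induce the same similarity transformation, which presumably will be used in the sequel to replace an inconvenient $Q$ (e.g., a complex Hadamard matrix whose rows have a prescribed norm) with a conveniently rescaled $S$ without altering the similarity class.
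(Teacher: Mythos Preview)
Your argument is correct and essentially identical to the paper's own one-line computation, which writes \(QAQ^{-1} = (\alpha S)A(\alpha S)^{-1} = \frac{\alpha}{\alpha}SAS^{-1} = SAS^{-1}\). The only difference is that you explicitly note the invertibility of \(Q\) before using it, which is a harmless addition.
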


\begin{proof}
\( QAQ^{-1} = (\alpha S)A(\alpha S)^{-1} = \frac{\alpha}{\alpha}SAS^{-1} = SAS^{-1}\).
\end{proof}

Malamud \cite[Proposition 4.2]{malamud2005} provides the following result on the relationship between a polynomial and its critical points.

\begin{prop}[Malamud]
\label{prop:malamud}
If \(p\) is a polynomial, then there is a normal matrix \(A\) such that \(\sigma(A)=\Lambda\) and \(\sigma(A_{(n)}) = \Lambda'\).
\end{prop}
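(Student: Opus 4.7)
The plan is to realize $A$ as a unitary conjugate $UDU^*$ of the diagonal matrix $D := \diag{\lambda_1, \dots, \lambda_n}$, choosing the unitary $U$ so that $e_n$ is a trace vector of $A$. Pereira's theorem will then guarantee that $P := I - e_n e_n^*$ is a differentiator of $A$, and by the example following that theorem the compression of $A$ onto $P\mathbb{C}^n$ is precisely the principal submatrix $A_{(n)}$. Hence $p_{A_{(n)}}(t) = \tfrac{1}{n}p'_A(t) = \tfrac{1}{n}p'(t)$, which yields $\sigma(A_{(n)}) = \Lambda'$. The matrix $A$ is unitarily similar to a diagonal matrix, so it is automatically normal with $\sigma(A) = \sigma(D) = \Lambda$.

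For the construction, observe that for any unitary $U$ and any nonnegative integer $k$,
\[
e_n^* A^k e_n = e_n^* U D^k U^* e_n = (U^* e_n)^* D^k (U^* e_n),
\]
and also $\tau(A^k) = \tau(D^k)$, so $e_n$ is a trace vector of $A$ exactly when $U^* e_n$ is a trace vector of $D$. By Lemma \ref{lem:diagtrace}, it suffices that every entry of $U^* e_n$ have modulus $1/\sqrt{n}$. Accordingly, I set $w := \tfrac{1}{\sqrt{n}}(1, \dots, 1)^\top$ and take $U$ to be any unitary matrix satisfying $U^* e_n = w$; such a $U$ exists because $w$ is a unit vector and can be completed to an orthonormal basis of $\mathbb{C}^n$, after which $U^*$ is determined by sending this basis to the standard basis (with $w$ sent to $e_n$).

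With $A := UDU^*$ chosen in this way, combining the trace-vector calculation above with Pereira's theorem and the identification of the compression with $A_{(n)}$ immediately produces the desired characteristic polynomial, completing the proof. There is no real obstacle beyond bookkeeping: the key point is that Lemma \ref{lem:diagtrace} provides an abundant supply of trace vectors for diagonal matrices, and the whole argument reduces to transporting one such vector to $e_n$ by a unitary change of basis.
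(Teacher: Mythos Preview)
Your proof is correct and follows essentially the same route as the paper: set $A=UDU^*$, verify via Lemma~\ref{lem:diagtrace} that $U^*e_n$ is a trace vector of $D$, deduce that $e_n$ is a trace vector of $A$, and invoke Pereira's theorem to conclude $\sigma(A_{(n)})=\Lambda'$. The only cosmetic difference is that the paper exhibits $U$ explicitly as $\tfrac{1}{\sqrt{n}}F_n$ (the scaled DFT matrix, a complex Hadamard matrix), whereas you obtain $U$ by extending the constant unit vector $w$ to an orthonormal basis; both constructions yield a unitary whose $n$\textsuperscript{th} row has all entries of modulus $1/\sqrt{n}$.
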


In particular, Malamud shows that if \(A = UDU^*\), where \(D = \diag{\lambda_1, \dots, \lambda_n}\) and \(U\) is a unitary matrix satisfying 
\begin{equation}
|u_{nj}|=\frac{1}{\sqrt{n}},~\forall j \in \langle n \rangle, \label{malprop}
\end{equation}
then the matrix \(A\) satisfies the conclusion of Proposition \ref{prop:malamud}.

Following the work of Pereira, we provide an alternate proof to the above result and generalize it to any principal submatrix of \(A\).

\begin{theorem}
\label{thm:genmal}
Let \(\Lambda = \{\lambda_1,\dots,\lambda_n\}\) be a list, and let \(U \in M_n(\mathbb{C})\) be a unitary matrix such that for some \(i \in \langle n \rangle\), \(|u_{ij}| = \frac{1}{\sqrt{n}}\) for all \(j \in \langle n \rangle\). If \(A = UDU^*\), where \(D = \diag{\lambda_1, \dots, \lambda_n}\), then \(\sigma \left(A_{(i)} \right) = \Lambda'\).
\end{theorem}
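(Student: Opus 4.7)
The plan is to package the hypothesis on row $i$ of $U$ into a trace-vector statement and then invoke Pereira's theorem to identify the compression with $A_{(i)}$.

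First I would set $z := U^*e_i$, so that the $j$\textsuperscript{th} component of $z$ is $\overline{u_{ij}}$, and therefore $|z_j| = |u_{ij}| = 1/\sqrt{n}$ for every $j \in \langle n\rangle$. By Lemma \ref{lem:diagtrace}, this $z$ is a trace vector of the diagonal matrix $D$.

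Next I would transport this property over to $A$ by direct computation: since $A = UDU^*$ with $U$ unitary, $A^k = UD^kU^*$ for every nonnegative integer $k$, and the trace is similarity-invariant, so
\[ e_i^* A^k e_i = e_i^* U D^k U^* e_i = (U^* e_i)^* D^k (U^* e_i) = z^* D^k z = \tau(D^k) = \tau(A^k). \]
This says precisely that $e_i$ itself is a trace vector of $A$. By Pereira's theorem, the projection $P := I - e_i e_i^*$ is then a differentiator of $A$, which means the compression of $A$ onto $P\mathbb{C}^n$ has characteristic polynomial $\tfrac{1}{n}p_A'(t)$.

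Finally, as noted in the example following Pereira's theorem, the compression of $A$ onto $P\mathbb{C}^n = (I - e_ie_i^*)\mathbb{C}^n$ is exactly the principal submatrix $A_{(i)}$. Hence $p_{A_{(i)}}(t) = \tfrac{1}{n}p_A'(t)$, whose roots (with multiplicity) are $\Lambda'$, giving $\sigma(A_{(i)}) = \Lambda'$. I do not anticipate a real obstacle here since all the machinery has been assembled earlier; the only thing that requires a moment's care is recognizing that the hypothesis $|u_{ij}| = 1/\sqrt n$ on a \emph{row} of $U$ is what makes $U^*e_i$ have constant modulus entries and so feed correctly into Lemma \ref{lem:diagtrace}.
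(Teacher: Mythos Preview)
Your proposal is correct and follows essentially the same route as the paper: set $z=U^*e_i$, use Lemma~\ref{lem:diagtrace} to see $z$ is a trace vector of $D$, transport this to show $e_i$ is a trace vector of $A$ via $e_i^*A^ke_i=z^*D^kz=\tau(D^k)=\tau(A^k)$, and then invoke Pereira's theorem together with the identification of the compression onto $(I-e_ie_i^*)\mathbb{C}^n$ with $A_{(i)}$. If anything, your write-up is slightly more explicit than the paper's about the final step from ``$e_i$ is a trace vector'' to ``$\sigma(A_{(i)})=\Lambda'$''.
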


\begin{proof}
Suffices to show that \(e_i\) is a trace vector of \(A\). Notice that \(U^* e_i\) is the entrywise complex conjugate of the \(i\)\textsuperscript{th}-row of \(U\). By hypothesis, each element of this row has modulus \(\frac{1}{\sqrt{n}}\), thus, by \hyp{Lemma}{lem:diagtrace}, \(U^* e_i\) is a trace vector of \(D\). Moreover, if \(k\) is any nonnegative integer, then 
\begin{align*} 
e_i^* A^k e_i = e_i^*\left(UD^k U^*\right)e_i = (U^* e_i)^* D^k (U^* e_i) = \frac{1}{n}\trace D = \frac{1}{n}\trace A = \tau(A,    	
\end{align*}
i.e., \(e_i\) is trace vector for \(A\).
\end{proof}

A proof of Proposition \ref{prop:malamud} is immediate once a unitary matrix is provided that satisfies property \eqref{malprop}. Recall that an $n$-by-$n$ matrix \(H\) is called a \emph{Hadamard matrix (of order $n$)} if \(h_{ij} \in \{ \pm 1 \}\) and \(HH^\top = nI\). A matrix \(H\) is called a \emph{complex Hadamard matrix (of order $n$)} if \(|h_{ij}| = 1\) and \(HH^* = nI\). Notice that for any complex Hadamard matrix \(H \in M_n({\mathbb{C}})\), the matrix \(U := \frac{1}{\sqrt{n}}H\) is unitary. 

For a fixed positive integer \(n\) and the complex scalar \(\omega := \exp(-2\pi i/n)\), the matrix 
\[
 F = F_n :=
\begin{bmatrix}
1 & 1 & 1 & \dots & 1 \\
1 & \omega & \omega^2 & \dots & \omega^{n-1} \\
1 & \omega^2 & \omega^4 & \dots & \omega^{2(n-1)} \\
      \vdots & \vdots & \vdots & \ddots & \vdots \\
      1 & \omega^{n-1} & \omega^{2(n-1)} & \dots & \omega^{(n-1)^2}
    \end{bmatrix},
\]
is called the \emph{discrete Fourier transform (DFT) matrix (of order $n$)}. As is well-known, the DFT matrices are complex Hadamard matrices. Since \(|\omega^k| = |\omega|^k=1^k = 1\) for every integer \(k\), it follows that every row of the unitary matrix \(U:=\frac{1}{\sqrt{n}}F_n\) satisfies the condition required in Proposition \ref{prop:malamud}. Furthermore, if \(\Lambda = \{\lambda_1,\dots,\lambda_n\}\) is a list and \(A = U D U^*\), \(D = \diag{\lambda_1,\dots,\lambda_n}\), then, following Theorem \ref{thm:genmal}, \(\sigma(A_{(i)}) = \Lambda'\), for every \( i \in \langle n \rangle\).

For \( S \in GL_n(\bb{C})\), the set 
\[ \mathcal{C}(S) := \{ x \in \bb{C}^n \mid SDS^{-1} \ge 0,~D=\diag{x_1,\dots,x_n} \} \] 
is called the \emph{spectracone of S} \cite{jp_psniep,johnsonpaparella2016}. It is known that the spectracones corresponding to Hadamard matrices comprise a large class of realizable real spectra \cite{johnsonpaparella2016}. Furthermore, spectracones corresponding to DFT matrices comprise a large class of realizable nonreal spectra \cite{jp_psniep}.

\begin{corollary}
Let \(\Lambda = \{\lambda_1,\dots,\lambda_n\}\) be a list and \(H\) be a complex Hadamard matrix. If \(A = UDU^* \ge 0\), where \(U:=\frac{1}{\sqrt{n}}H\) and \(D := \diag{\lambda_1,\dots,\lambda_n}\), then \(\Lambda'\) is realizable. In particular, \(A_{(i)}\) is a realizing matrix for \(\Lambda'\).
\end{corollary}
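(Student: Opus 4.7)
The plan is to invoke Theorem \ref{thm:genmal} directly, since the hypotheses have been arranged to match its conclusion. First I would observe that, by the definition of a complex Hadamard matrix, every entry of \(H\) has modulus \(1\), so every entry of \(U = \frac{1}{\sqrt{n}} H\) has modulus \(\frac{1}{\sqrt{n}}\). In particular, for \emph{every} \(i \in \langle n \rangle\), the \(i\)\textsuperscript{th} row of \(U\) satisfies the hypothesis \(|u_{ij}| = \frac{1}{\sqrt{n}}\) for all \(j \in \langle n \rangle\).

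Next, applying Theorem \ref{thm:genmal} with this choice of \(U\) yields \(\sigma(A_{(i)}) = \Lambda'\) for each \(i \in \langle n \rangle\). The remaining observation is that \(A_{(i)}\) is a principal submatrix of \(A\), and since \(A \ge 0\) by hypothesis, the principal submatrix \(A_{(i)}\) inherits the entrywise nonnegativity. Consequently \(A_{(i)}\) is a nonnegative matrix with spectrum \(\Lambda'\), which exhibits \(\Lambda'\) as realizable and certifies \(A_{(i)}\) as a realizing matrix.

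There is no real obstacle: the corollary is essentially a packaging of Theorem \ref{thm:genmal} together with the fact that complex Hadamard matrices automatically supply rows of constant modulus \(\frac{1}{\sqrt{n}}\), and that nonnegativity is preserved under taking principal submatrices. The proof should therefore be only a couple of sentences.
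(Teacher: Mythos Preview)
Your proposal is correct and matches the paper's approach exactly: the corollary is stated without an explicit proof, but the surrounding discussion makes precisely your argument—every entry of \(U=\tfrac{1}{\sqrt{n}}H\) has modulus \(\tfrac{1}{\sqrt{n}}\), so Theorem~\ref{thm:genmal} gives \(\sigma(A_{(i)})=\Lambda'\), and nonnegativity of \(A_{(i)}\) follows from \(A\ge 0\).
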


\begin{remark}
There is further evidence that Conjectrure \ref{conj:johnson} is true; recall that Boyle and Handelman \cite{boylehandelman1991} found necessary and  sufficient conditions for \(\tilde{\Lambda}=\Lambda \cup \{0,\dots,0\}\) to be the spectrum of a nonnegative matrix. Laffey \cite{laffey2012} gave a construction for a realizing \(n\text{-by-}n\) matrix \(X\). Laffey et al. \cite{laffeyloewysmigoc2016} showed that \(e_n\) is a trace vector of \(X\), thus the critical points of \(\tilde{\Lambda}\) are realizable by \(X_{(n)}\).
\end{remark}

A matrix of the form 
\begin{align}
C =
\begin{bmatrix}
c_1 & c_2 & \cdots & c_n \\
c_n & c_1 & \cdots & c_{n-1} \\
\vdots & \vdots & & \vdots \\
c_2 & c_3 & \cdots & c_1 
\end{bmatrix}  \label{circ}
\end{align}
is called a \emph{circulant matrix} or \emph{circulant}. It is well-known that a matrix \(C\) is a circulant if and only if there is a diagonal matrix \(D\) such that \(C = U D U^*\), where \(U := \frac{1}{\sqrt{n}}F_n\) \cite[Theorems 3.2.2 and 3.2.3]{davis1979}. Kushel and Tyaglov \cite[Theorem 2.1]{kusheltyaglov2016} showed that if \(p\) is the characteristic polynomial of \(C\) and \(\Lambda = \sigma(C)\), then \(\sigma(C_{(n)}) = \Lambda'\). However, Theorem \ref{thm:genmal} yields a more general result with a simpler proof. 

\begin{corollary}
If \(C\) is defined as in \eqref{circ}, \(\Lambda = \sigma(C)\), then \(\sigma(C_{(i)}) = \Lambda'\), for every \(i \in \langle n \rangle\).
\end{corollary}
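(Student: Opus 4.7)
The plan is to reduce the corollary to a direct application of Theorem \ref{thm:genmal}. The two ingredients are already assembled in the paragraph preceding the corollary: first, the classical diagonalization of circulants asserts that $C$ has the form $C = U D U^*$, with $U := \frac{1}{\sqrt{n}} F_n$ and $D$ a diagonal matrix whose diagonal entries form the spectrum $\Lambda$ of $C$; second, Theorem \ref{thm:genmal} guarantees $\sigma(A_{(i)}) = \Lambda'$ whenever $A = UDU^*$ and the $i$\textsuperscript{th} row of $U$ consists entirely of entries of modulus $1/\sqrt{n}$.

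The single remaining observation is that every row of $U := \frac{1}{\sqrt{n}} F_n$ satisfies this modulus condition. Indeed, each entry of $F_n$ is a power of the root of unity $\omega = \exp(-2\pi\ii/n)$, so has modulus $1$; dividing by $\sqrt{n}$ gives the required value $1/\sqrt{n}$ in every entry, for every row. Hence the hypothesis of Theorem \ref{thm:genmal} is met for every choice of $i \in \langle n \rangle$ simultaneously, which yields the conclusion $\sigma(C_{(i)}) = \Lambda'$ for all such $i$.

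There is essentially no obstacle here: the corollary is a one-line consequence once Theorem \ref{thm:genmal} is in hand, and the entire content is noting that the DFT matrix is a complex Hadamard matrix (already highlighted earlier in the section) and that circulants are precisely the matrices diagonalized by $\frac{1}{\sqrt{n}} F_n$. I would simply cite both of these facts and invoke Theorem \ref{thm:genmal}, perhaps remarking that this strengthens the Kushel--Tyaglov result, which only handles the case $i = n$.
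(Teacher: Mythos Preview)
Your proposal is correct and follows essentially the same route as the paper: invoke the diagonalization \(C = UDU^*\) with \(U = \tfrac{1}{\sqrt{n}}F_n\), observe that every row of \(U\) has entries of modulus \(1/\sqrt{n}\), and apply Theorem~\ref{thm:genmal}. The paper's one-line proof additionally cites Lemma~\ref{lem:scalsim}, presumably to pass from the classical statement \(C = F_n D F_n^{-1}\) to the normalized form \(C = UDU^{-1} = UDU^*\); you instead take the normalized form as given (as the paper itself does in the paragraph preceding the corollary), which is entirely acceptable.
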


\begin{proof}
Immediate in view of Theorem \ref{thm:genmal} and Lemma \ref{lem:scalsim}.
\end{proof}

We conclude this section with a proof of Theorem \ref{thm:interlace}.

\begin{proof}[Proof of Theorem \ref{thm:interlace}]
Let \(\Lambda\) be a real list.  Set \(A = H\diag{\lambda_1, \dots, \lambda_n}H^{-1}\), where \(H\) is any complex Hadamard matrix. By Lemma \ref{lem:scalsim}, \(A\) is a unitary similarity of a real diagonal matrix, so it is Hermitian. Furthermore, by Theorem \ref{thm:genmal}, \(\sigma\left(A_{(i)}\right) = \Lambda'\), for every \( i \in \langle n \rangle \). By the Cauchy interlacing theorem \cite[Theorem 4.3.17]{hornjohnson2012}, \(\Lambda'\)  interlaces \(\Lambda\).
\end{proof}

\section{Concluding Remarks}\label{sec:concrem}

Since Johnson's conjecture is stronger than Monov's conjecture, we have shown both Johnson's and Monov's conjecture for Sule{\u\i}maonva spectra, Ciarlet spectra, spectra realizable by companion matrices, and spectra realizable by complex Hadamard Similarities. In the general case, Johnson's conjecture remains open for \(n \ge 5\).

Since the J-LL and trace conditions imply the moment condition, proving that J-LL is satisfied for the critical points of realizable lists is a worthwhile avenue for investigation, as this would prove Monov's conjecture.

Cheung and Ng \cite{cheungng2010}, recognize that other constructions of a d-companion matrix exist. They provide methods for constructing such matrices which have already proven fruitful in the study of circulant matrices \cite{kusheltyaglov2016}. Future research on finding alternative constructions to the d-companion matrix may be critical to solving this conjecture.

It is simple to show that, for spectra realizable via companion matrices, there exists an anti-derivative with roots that are realizable as well. Notice that if
\( p(t) = t^n + \sum_{k=1}^n a_{n-k} t^{n-k} \), where each \(a_k \le 0\),
then any anti-derivative is of the form
\[
P(t) = \frac{1}{n+1} t^{n+1} + \sum_{k=1}^n \frac{1}{n-k+1} a_{n-k} t^{n-k+1}+c, \quad a_k\le0,
\]
Notice that \((n+1)P\) is monic and its companion matrix is nonnegative if and only if \(c\) is nonpositive. Thus, there are infinitely-many anti-derivatives whose zeros form a realizable list. 

Bhat and Mukherjee \cite{bhatmukherjee2007} introduced integrators as an analog to differentiatos. While every matrix has a differentiator \cite{pereira2003}, the existence of an integrator is neither guaranteed nor unique when an integrator is available. An explicit construction of an integrator is an avenue worthy of further exploration.

\section*{Acknowledgements}

The authors would like to thank the National Science Foundation for funding the REU program and the University of Washington Bothell for hosting the research program. In particular, we would like to thank Professors Jennifer McLoud-Mann and Casey Mann for their efforts.

\bibliography{references}

\begin{thebibliography}{10}

\bibitem{bp1994}
A.~Berman and R.~J. Plemmons.
\newblock {\em Nonnegative matrices in the mathematical sciences}, volume~9 of
  {\em Classics in Applied Mathematics}.
\newblock Society for Industrial and Applied Mathematics (SIAM), Philadelphia,
  PA, 1994.
\newblock Revised reprint of the 1979 original.

\bibitem{borobia1995}
A.~Borobia.
\newblock On the nonnegative eigenvalue problem.
\newblock {\em Linear Algebra Appl.}, 223/224:131--140, 1995.
\newblock Special issue honoring Miroslav Fiedler and Vlastimil Pt\'ak.

\bibitem{boylehandelman1991}
M.~Boyle and D.~Handelman.
\newblock The spectra of nonnegative matrices via symbolic dynamics.
\newblock {\em Ann. of Math. (2)}, 133(2):249--316, 1991.

\bibitem{cheungng2006}
W.~S. Cheung and T.~W. Ng.
\newblock A companion matrix approach to the study of zeros and critical points
  of a polynomial.
\newblock {\em J. Math. Anal. Appl.}, 319(2):690--707, 2006.

\bibitem{cheungng2010}
W.~S. Cheung and T.~W. Ng.
\newblock Relationship between the zeros of two polynomials.
\newblock {\em Linear Algebra Appl.}, 432(1):107--115, 2010.

\bibitem{chu1998}
M.~T. Chu.
\newblock Inverse eigenvalue problems.
\newblock {\em SIAM Rev.}, 40(1):1--39, 1998.

\bibitem{ciarlet1968}
P.~G. Ciarlet.
\newblock Some results in the theory of nonnegative matrices.
\newblock {\em Linear Algebra and Appl.}, 1(1):139--152, 1968.

\bibitem{croninlaffey2014}
A.~G. Cronin and T.~J. Laffey.
\newblock Perturbations on constructible lists preserving realizability in the
  {NIEP} and questions of {M}onov.
\newblock {\em Linear Algebra Appl.}, 445:206--222, 2014.

\bibitem{davis1959}
C.~Davis.
\newblock Eigenvalues of compressions.
\newblock {\em Bull. Math. Soc. Sci. Math. Phys. R. P. Roumaine (N.S.)}, 3
  (51):3--5, 1959.

\bibitem{davis1979}
P.~J. Davis.
\newblock {\em Circulant matrices}.
\newblock John Wiley \&\ Sons, New York-Chichester-Brisbane, 1979.
\newblock A Wiley-Interscience Publication, Pure and Applied Mathematics.

\bibitem{egleston2004}
P.~D. Egleston, T.~D. Lenker, and S.~K. Narayan.
\newblock The nonnegative inverse eigenvalue problem.
\newblock {\em Linear Algebra Appl.}, 379:475--490, 2004.
\newblock Tenth Conference of the International Linear Algebra Society.

\bibitem{friedland1978}
S.~Friedland.
\newblock On an inverse problem for nonnegative and eventually nonnegative
  matrices.
\newblock {\em Israel J. Math.}, 29(1):43--60, 1978.

\bibitem{holtz2004}
O.~Holtz.
\newblock M-matrices satisfy newton's inequalities.
\newblock {\em Proc. Amer. Math. Soc.}, 133(3):711--717, 2004.

\bibitem{hornjohnson2012}
R.~A. Horn and C.~R. Johnson.
\newblock {\em Matrix Analysis}.
\newblock Cambridge University Press, New York, NY, USA, 2nd edition, 2012.

\bibitem{j1981}
C.~R. Johnson.
\newblock Row stochastic matrices similar to doubly stochastic matrices.
\newblock {\em Linear and Multilinear Algebra}, 10(2):113--130, 1981.

\bibitem{johnsonprivate}
C.~R. Johnson.
\newblock Personal communication, 2015.

\bibitem{niepSurvey}
C.~R. Johnson, C.~Mariju{\'a}n, P.~Paparella, and M.~Pisonero.
\newblock The {NIEP}.
\newblock {\em Operator Theory: Advances and Applications}.
\newblock To appear, \url{https://arxiv.org/abs/1703.10992}.

\bibitem{jp_psniep}
C.~R. Johnson and P.~Paparella.
\newblock Perron similarities and the nonnegative inverse eigenvalue problem.
\newblock In preparation.

\bibitem{johnsonpaparella2016}
C.~R. Johnson and P.~Paparella.
\newblock Perron spectratopes and the real nonnegative inverse eigenvalue
  problem.
\newblock {\em Linear Algebra Appl.}, 493:281--300, 2016.

\bibitem{kusheltyaglov2016}
O.~Kushel and M.~Tyaglov.
\newblock Circulants and critical points of polynomials.
\newblock {\em J. Math. Anal. Appl.}, 439(2):634--650, 2016.

\bibitem{laffeysmigoc2006}
H.~Laffey, Thomas J.~Smigoc.
\newblock Nonnegative realization of spectra having negative real parts.
\newblock {\em Linear Algebra and Appl.}, 416:148--159, 2006.

\bibitem{laffey2012}
T.~J. Laffey.
\newblock A constructive version of the {B}oyle-{H}andelman theorem on the
  spectra of nonnegative matrices.
\newblock {\em Linear Algebra Appl.}, 436(6):1701--1709, 2012.

\bibitem{laffeyloewysmigoc2016}
T.~J. Laffey, R.~Loewy, and H.~\v{S}migoc.
\newblock Power series with positive coefficients arising from the
  characteristic polynomials of positive matrices.
\newblock {\em Math. Ann.}, 364(1-2):687--707, 2016.

\bibitem{loewylondon1978}
R.~Loewy and D.~London.
\newblock A note on an inverse problem for nonnegative matrices.
\newblock {\em Linear and Multilinear Algebra}, 6(1):83--90, 1978/79.

\bibitem{malamud2005}
S.~M. Malamud.
\newblock Inverse spectral problem for normal matrices and the {G}auss-{L}ucas
  theorem.
\newblock {\em Trans. Amer. Math. Soc.}, 357(10):4043--4064, 2005.

\bibitem{minc1988}
H.~Minc.
\newblock {\em Nonnegative matrices}.
\newblock Wiley-Interscience Series in Discrete Mathematics and Optimization.
  John Wiley \& Sons, Inc., New York, 1988.
\newblock A Wiley-Interscience Publication.

\bibitem{monov2008}
V.~V. Monov.
\newblock A family of symmetric polynomials of the eigenvalues of a matrix.
\newblock {\em Linear Algebra Appl.}, 429(8-9):2199--2208, 2008.

\bibitem{pt2017}
P.~Paparella and G.~K. Taylor.
\newblock The volume of the trace-nonnegative polytope via the {I}rwin-{H}all
  distribution.
\newblock {\em Minnesota Journal of Undergraduate Mathematics}.
\newblock To appear, \url{https://arxiv.org/abs/1511.03700}.

\bibitem{pereira2003}
R.~Pereira.
\newblock Differentiators and the geometry of polynomials.
\newblock {\em J. Math. Anal. Appl.}, 285(1):336--348, 2003.

\bibitem{perfect1953}
H.~Perfect.
\newblock Methods of constructing certain stochastic matrices.
\newblock {\em Duke Math. J.}, 20(3):395--404, 09 1953.

\bibitem{bhatmukherjee2007}
B.~V. Rajarama~Bhat and M.~Mukherjee.
\newblock Integrators of matrices.
\newblock {\em Linear Algebra Appl.}, 426(1):71--82, 2007.

\bibitem{soto2012}
R.~L. Soto.
\newblock Nonnegative inverse eigenvalue problem.
\newblock In H.~A. Yasser, editor, {\em Linear Algebra - Theorems and
  Applications}, chapter~05. InTech, Rijeka, 2012.

\end{thebibliography}
\bibliographystyle{abbrv}

\end{document}